\DeclareMathAlphabet{\mathpzc}{OT1}{pzc}{m}{it}
\newtheorem{te}{Theorem}[section]
\theoremstyle{definition}
\theoremstyle{os}
\newtheorem{os}[te]{Remark}
\theoremstyle{prop}
\theoremstyle{lem}
\theoremstyle{coro}
\newtheorem{coro}[te]{Corollary}
\theoremstyle{conj}
\numberwithin{equation}{section}
\begin{document}

	\title[Stable processes at random time]{Space-time fractional equations and the related stable processes at random time}
	\author{Enzo Orsingher}
	\email{enzo.orsingher@uniroma1.it}
	\author{Bruno Toaldo}
	\email{bruno.toaldo@uniroma1.it}	
	\address{Department of Statistical Sciences, Sapienza University of Rome}
	\keywords{Fractional Laplacian, Riemann-Liouville and Dzerbayshan-Caputo derivatives, stable processes, iterated Brownian motion, modified Bessel functions, Gauss-Laplace distributions, telegraph processes and equations.}
	\date{\today}
	\dedicatory{}
	\subjclass[2000]{60G51, 60G52, 35C05.}

\begin{abstract}
In this paper we consider the general space-time fractional equation of the form $\sum_{j=1}^m \lambda_j \frac{\partial^{\nu_j}}{\partial t^{\nu_j}} w(x_1, \cdots, x_n ; t) = -c^2 \left( -\Delta \right)^\beta w(x_1, \cdots, x_n ; t)$, for $\nu_j \in \left( 0,1 \right]$, $\beta \in \left( 0,1 \right]$ with initial condition $w(x_1, \cdots, x_n ; 0)= \prod_{j=1}^n \delta (x_j)$. We show that the solution of the Cauchy problem above coincides with the probability density of the $n$-dimensional vector process $\bm{S}_n^{2\beta} \left( c^2 \mathpzc{L}^{\nu_1, \cdots, \nu_m} (t) \right)$, $t>0$, where $\bm{S}_n^{2\beta}$ is an isotropic stable process independent from $\mathpzc{L}^{\nu_1, \cdots, \nu_m}(t)$ which is the inverse of $\mathpzc{H}^{\nu_1, \cdots, \nu_m} (t) = \sum_{j=1}^m \lambda_j^{1/\nu_j} H^{\nu_j} (t)$, $t>0$, with $H^{\nu_j}(t)$ independent, positively-skewed stable r.v.'s of order $\nu_j$. The problem considered includes the fractional telegraph equation as a special case as well as the governing equation of stable processes. The composition $\bm{S}_n^{2\beta} \left( c^2 \mathpzc{L}^{\nu_1, \cdots, \nu_m} (t) \right)$, $t>0$, supplies a probabilistic representation for the solutions of the fractional equations above and coincides for $\beta = 1$ with the $n$-dimensional Brownian motion at the random time $\mathpzc{L}^{\nu_1, \cdots, \nu_m} (t)$, $t>0$. The iterated process $\mathfrak{L}^{\nu_1, \cdots, \nu_m}_r (t)$, $t>0$, inverse to $\mathfrak{H}^{\nu_1, \cdots, \nu_m}_r (t) =\sum_{j=1}^m \lambda_j^{1/\nu_j} \, _1H^{\nu_j} \left( \, _2H^{\nu_j} \left( \, _3H^{\nu_j} \left( \cdots \, _{r}H^{\nu_j} (t) \cdots \right) \right) \right)$, $t>0$, permits us to construct the process $\bm{S}_n^{2\beta} \left( c^2 \mathfrak{L}^{\nu_1, \cdots, \nu_m}_r (t) \right)$, $t>0$, the density of which solves a space-fractional equation of the form of the generalized fractional telegraph equation. For $r \to \infty$ and $\beta = 1$ we obtain a probability density, independent from $t$, which represents the multidimensional generalisation of the Gauss-Laplace law and solves the equation $\sum_{j=1}^m \lambda_j w(x_1, \cdots, x_n) = c^2 \sum_{j=1}^n \frac{\partial^2}{\partial x_j^2} w(x_1, \cdots, x_n)$. Our analysis represents a general framework of the interplay between fractional differential equations and composition of processes of which the iterated Brownian motion is a very particular case.
\end{abstract}

	\maketitle

\section{Introduction and preliminaries}

\subsection{Introduction}
The study of the relationships between fractional differential equations and stochastic processes has gained considerable popularity during the past three decades. In pioneering works simple time-fractional diffusion equations have been considered (see for example Fujita \cite{fujita}) and its connection with stable processes has been established (see Orsingher and Beghin \cite{orsann}; the reader can also consult Zolotarev \cite{Zol86} for details on stable laws). In such papers the authors have shown that the compositions of processes have distributions satisfying fractional equations of different form. The iterated Brownian motion $B_1 \left( \left| B_2(t) \right| \right)$, $t>0$, (with $B_1$ and $B_2$ independent Brownian motions) has distribution solving the fractional equation (see Allouba and Zheng \cite{allouba})
\begin{equation*}
\frac{\partial^{\frac{1}{2}}}{\partial t^{\frac{1}{2}}} u(x, t) \, = \, \frac{1}{2^{\frac{3}{2}}} \frac{\partial^2}{\partial x^2} u(x, t), \qquad x \in \mathbb{R}, t>0.
\end{equation*}
as well as the fourth-order equation (see DeBlassie \cite{debl})
\begin{equation*}
\frac{\partial}{\partial t} u(x, t) \, = \, \frac{1}{2^3} \frac{\partial^4}{\partial x^4} u(x, t) + \frac{1}{2\sqrt{2\pi t}} \frac{d^2}{dx^2} \delta(x), \qquad x \in \mathbb{R}, t>0.
\end{equation*}
It has been shown by different authors (see Benachour  et al. \cite{benac}) that the solution to the biquadratic heat-equation
\begin{equation*}
\frac{\partial}{\partial t} u(x, t) \, = \, -\frac{1}{2^3} \frac{\partial^4}{\partial x^4} u(x, t), \qquad x \in \mathbb{R}, t>0,
\end{equation*}
coincides with
\begin{equation*}
u(x, t) \, = \, \mathbb{E} \left\lbrace \frac{1}{\sqrt{2\pi \left| B(t) \right|}} \cos \left( \frac{x^2}{2 \left| B(t) \right| - \frac{\pi}{4} } \right) \right\rbrace
\end{equation*}
and appears as the distribution of the composition of the Fresnel pseudoprocess with an independent Brownian motion (see Orsingher and D'Ovidio \cite{vibrat}).

When the fractional telegraph equation
\begin{equation}
\left( \frac{\partial^{2\nu}}{\partial t^{2\nu}} + 2\lambda \frac{\partial^\nu}{\partial t^\nu} \right) u(x,t) \, = \, c^2 \frac{\partial^2}{\partial x^2} u(x, t), \qquad x \in \mathbb{R}, t>0,
\label{T(B)}
\end{equation}
for $\nu \in \left( 0,1 \right]$, $\lambda>0$, $c>0$, is considered, the solution of problem \eqref{T(B)} for $\nu = \frac{1}{2}$ has been proved to coincide with the distribution of $T \left( \left| B(t) \right| \right)$, $t>0$, where $T(t)$, $t>0$, is a telegraph process independent from the Brownian motion $B(t)$, $t>0$ (see Orsingher and Beghin \cite{ptrf}). From the analytical point of view, equations similar to \eqref{T(B)} have been studied in the form
\begin{equation*}
\frac{\partial^\alpha}{\partial t^\alpha} u(x, t) + a \frac{\partial^\beta}{\partial t^\beta} u(x, t) \, = \, c^2 \frac{\partial^\gamma}{\partial x^\gamma} u(x, t) + \xi^2 u(x, t) + \varphi (x, t), \quad x \in \mathbb{R}, t>0,
\end{equation*}
for $\alpha \in \left[ 0,1 \right]$, $\beta \in \left[ 0,1 \right]$, $\gamma > 0$, by Saxena et al. \cite{saxena}. These authors have provided the Fourier transform  of solutions of fractional equations of the form
\begin{equation*}
a_1 \frac{\partial^{\alpha_1}}{\partial t^{\alpha_1}} u(x, t) + \cdots + a_{n+1} \frac{\partial^{\alpha_{n+1}}}{\partial t^{\alpha_{n+1}}} u(x, t) \, = \, c^2 \frac{\partial^\beta}{\partial x^\beta} u(x, t) + \xi^2 u(x, t) + \varphi (x, t) 
\end{equation*}
for $\alpha_1, \cdots, \alpha_{n+1} \in \left( 0,1 \right)$ and $\beta >0$, (see \cite{saxenaarx}) in terms of generalized Mittag-Leffler functions (but no probabilistc interpretation has been given to these solutions). Telegraph equations emerge in electrodynamics, in the study of damped vibrations, in the analysis of the telegraph process. Its multidimensional version appears in studying vibrations of membranes and other structures subject to friction. Equations with many fractional derivatives emerge in the study of anomalous diffusions as pointed out by \cite{saxena,saxenaarx}.

The symmetric stable laws have probability density satisfying the space-fractional equation
\begin{equation*}
\frac{\partial }{\partial t} u(x, t) \, = \, \frac{\partial^\nu}{\partial |x|^\nu} u(x, t), \qquad x \in \mathbb{R}, t>0,
\end{equation*}
where $\frac{\partial^\nu}{\partial |x|^\nu}$ is the Riesz fractional derivative. For asymmetric stable laws the connection with fractional equations has been established by Feller \cite{Feller52}. The connection between fractional telegraph equations and stable laws has been established in a recent paper by D'Ovidio et al. \cite{toaldo}, in which the authors considered the multidimensional space-fractional extension of \eqref{T(B)}
\begin{align}
\left( \frac{\partial^{2\nu}}{\partial t^{2\nu}} +2\lambda \frac{\partial^\nu}{\partial t^\nu} \right) u(\bm{x}, t) \, = \, -c^2 \left( -\Delta \right)^\beta u(\bm{x},t), \qquad \bm{x} \in \mathbb{R}^n, t>0, 
\label{dot}
 \end{align}
for $\nu \in \left( 0,\frac{1}{2} \right]$, $\beta \in \left( 0,1 \right]$.
The solution to \eqref{dot} subject to the initial condition $u(\bm{x},0)=\delta (\bm{x})$ is given by the law of the composition of the form $\bm{S}_n^{2\beta} \left( c^2 \mathpzc{L}^\nu (t) \right)$, $t>0$, where $\bm{S}_n^{2\beta} (t)$, $t>0$, is an $n$-dimensional isotropic stable vector process and 
\begin{equation*}
\mathpzc{L}^\nu (t) = \inf \left\lbrace s: H_1^{2\nu} (s) + (2\lambda)^{\frac{1}{\nu}} H_2^\nu (s) \geq t \right\rbrace
\end{equation*}
where $H_1^{2\nu}(t)$ and $H_2^\nu (t)$, $t>0$, are independent positively-skewed stable processes, with $\nu \in \left( 0,\frac{1}{2} \right]$. For $\beta =1$ the composition above takes the form of a Brownian motion at the delayed time $\mathpzc{L}^\nu (t)$, $t>0$. For $\nu = \frac{1}{2}$ and $n=1$ this establishes the fine distributional relationship
\begin{equation*}
T \left( \left| B(t) \right| \right) \, \stackrel{\textrm{law}}{=} \, B \left( c^2 \mathpzc{L}^{\frac{1}{2}} (t) \right), \qquad t>0,
\end{equation*}
see \cite{toaldo}.

In the present paper we consider the further generalization of the space-time fractional equation with an arbirtrary number of time-fractional derivatives
\begin{align}
\begin{cases}
\sum_{j=1}^m \lambda_j \frac{^C\partial^{\nu_j}}{\partial t^{\nu_j}} w_{\nu_1, \cdots, \nu_m}^\beta (\bm{x}, t) \, = \, -c^2  \left( -\Delta \right)^\beta w_{\nu_1, \cdots, \nu_m}^\beta (\bm{x}, t), \qquad \bm{x} \in \mathbb{R}^n, t>0, \\
w_{\nu_1, \cdots, \nu_m}^\beta (\bm{x}, 0) \, = \, \delta(\bm{x}),
\end{cases}
\label{nostro}
\end{align}
for $\nu_j \in \left( 0,1 \right]$, $\beta \in \left( 0,1 \right]$, $\lambda_j >0$, $j=1, \cdots, m $. The symbol $\frac{^C\partial^\nu}{\partial t^\nu}$ stands for the Dzerbayshan-Caputo fractional derivative which is defined as
\begin{equation*}
\frac{^C\partial^\nu}{\partial t^\nu} f(t) \, = \, \frac{1}{\Gamma \left( m-\nu \right)} \int_0^t \frac{\frac{d^m}{ds^m} f(s)}{(t-s)^{\nu +1 -m}} ds, \qquad m-1<\nu<m, m \in \mathbb{N},
\end{equation*}
for an absolutely continuous function $f$ (for fractional calculus the reader can consult Kilbas et al. \cite{kill}). Some basic facts on the fractional Laplacian $-\left( -\Delta \right)^\beta$, $\beta \in \left( 0,1 \right)$ are given in Section \ref{prelmult} below.
 We show that the solution to \eqref{nostro} is given by the law of the process $\bm{S}_n^{2\beta} \left( c^2 \mathpzc{L}^{\nu_1, \cdots, \nu_m} (t) \right)$, $t>0$, where
\begin{equation}
\mathpzc{L}^{\nu_1, \cdots, \nu_m} (t) \, = \, \inf \left\lbrace s >0: \mathpzc{H}^{\nu_1, \cdots, \nu_m} (s) > t \right\rbrace
\label{lpzc}
\end{equation}
and
\begin{equation}
\mathpzc{H}^{\nu_1, \cdots, \nu_m} (t) \, = \,  \sum_{j=1}^m \lambda_j^{\frac{1}{\nu_j}} H_j^{\nu_j} (t), \qquad t>0,
\label{hpzc}
\end{equation}
for $H_j^{\nu_j}$, $j=1, \cdots, m$, totally positively-skewed stable processes (stable subordinators), of order $\nu_j$. In other words we show that the solution of a general space-time fractional equation (which includes reaction-diffusion equations, telegraph equations, diffusion equations as very special cases) coincides with the distribution of a stable vector process taken at a random time $\mathpzc{L^\nu (t)}$, $t>0$, constructed as the inverse of the combination of independent stable subordinators. For the classical Laplacian ($\beta = 1$) we have that the solution to \eqref{nostro} is the distribution of a Brownian motion at time $\mathpzc{L}^\nu (t)$, $t>0$.

We also prove that the law of the processes \eqref{lpzc} and \eqref{hpzc} are solutions of fractional differential equations. In particular we show that
\begin{equation*}
\mathpzc{h}_{\nu_1, \cdots, \nu_m} (x, t) \, = \, \frac{\Pr \left\lbrace \mathpzc{H}^{\nu_1, \cdots, \nu_m} (t) \in dx \right\rbrace}{dx}, 
\end{equation*}
is the solution to the space-fractional problem for $\nu_j \in \left( 0,1 \right)$
\begin{align}
\begin{cases}
\frac{\partial}{\partial t} \mathpzc{h}_{\nu_1, \cdots, \nu_m} (x, t) \, = \, \sum_{j=1}^m \lambda_j \frac{\partial^{\nu_j}}{\partial x^{\nu_j}} \mathpzc{h}_{\nu_1, \cdots, \nu_m} (x, t), \qquad x >0, t>0 \\
\mathpzc{h}_{\nu_1, \cdots, \nu_m} (x, 0) \, = \, \delta(x), \\
\mathpzc{h}_{\nu_1, \cdots, \nu_m} (0, t) \, = \, 0,
\end{cases}
\label{110}
\end{align}
while the law of $\mathpzc{L}^{\nu_1, \cdots, \nu_m} (t)$ solves
\begin{align}
	\begin{cases}
 	\sum_{j=1}^m \lambda_j \frac{\partial^{\nu_j}}{\partial t^{\nu_j}} \mathpzc{l}_{\nu_1, \cdots, \nu_m} (x, t) \, = \, -\frac{\partial}{\partial x} \mathpzc{l}_{\nu_1, \cdots, \nu_m} (x, t), \qquad x >0, t>0, \\
 	\mathpzc{l}_{\nu_1, \cdots, \nu_m} (0, t) \, = \, \sum_{j=1}^m \lambda_j \frac{t^{\nu_j}}{\Gamma \left( 1-\nu_j \right)},
	\end{cases}
	\label{111}
\end{align}
for $\nu_j \in \left( 0,1 \right)$.
In \eqref{110} and \eqref{111} the fractional derivatives must be meant in the Riemann-Liouville sense that is, for an absolutely continuous function $f$, see \cite{kill},
\begin{equation*}
\frac{\partial^\nu}{\partial x^\nu} f(x) \, = \, \frac{1}{\Gamma \left( m-\nu \right)} \frac{d^m}{dx^m} \int_0^x \frac{f(s)}{(x-s)^{\nu +1-m}} ds, \qquad m-1 < \nu < m, m \in \mathbb{N}.
\end{equation*}

A section is devoted to the case of the fractional equation with two time derivatives of order $\alpha \in \left( 0,1 \right]$ and $\nu \in \left( 0,1 \right]$ with $\alpha \neq \nu$,
\begin{align}
\begin{cases}
\left( \frac{^C\partial^\alpha}{\partial t^\alpha} + 2\lambda \frac{^C\partial^\nu}{\partial t^\nu} \right) w_{\alpha, \nu}^\beta \left( \bm{x}, t \right) \, = \, -c^2 \left( -\Delta \right)^\beta w_{\alpha, \nu}^\beta \left( \bm{x}, t \right), \qquad \bm{x} \in \mathbb{R}^n, t>0, \\
w_{\alpha, \nu}^\beta \left( \bm{x}, 0 \right) \, = \, \delta (\bm{x}),
\end{cases}
\label{1122}
\end{align}
which takes a telegraph-type structure for $\alpha = k\nu$, $k \in \mathbb{N}$, $k\nu \leq 1$. The Fourier-Laplace transform of the solution of \eqref{1122} for $\alpha = k\nu$ reads
\begin{align}
\int_0^\infty dt \, e^{-\mu t} \int_{\mathbb{R}^n} d\bm{x} \, e^{i \bm{\xi} \cdot \bm{x}} w_{k\nu, \nu}^\beta \left( \bm{x}, t \right) \, = \, \frac{\mu^{k\nu-1}+2\lambda \mu^{\nu-1}}{\mu^{k\nu} + 2\lambda \mu^\nu + c^2 \left\| \bm{\xi} \right\|^{2\beta}},
\label{foulapl2der}
\end{align}
where $\left\| \cdot \right\|$ is the usual euclidean norm.
For $k=2$, $n=1$, $\beta =1$, we have the classical fractional telegraph equation studied in \cite{ptrf}. The Fourier transform of $w_{2\nu, \nu} (x, t)$ reads
\begin{align}
 \widehat{w}_{2\nu, \nu} (\xi, t) \, = \, \frac{1}{2} \left[ \left( 1+\frac{\lambda}{\sqrt{\lambda^2-c^2\xi^2}} \right) E_{\nu,1} \left( -\eta_1 t^\nu \right) + \left( 1-\frac{\lambda}{\sqrt{\lambda^2-c^2\xi^2}} \right) E_{\nu,1} \left( -\eta_2 t^\nu \right) \right]
\label{telegrafo}
\end{align}
where $\eta_1$ and $\eta_2$ are the solutions to $\mu^{2\nu} + 2\lambda \mu^\nu + c^2 \xi^2 = 0$ and
\begin{equation*}
E_{\psi,\vartheta} (z) = \sum_{k=0}^\infty \frac{z^k}{\Gamma \left( \psi k + \vartheta \right)}, \qquad \psi, \vartheta >0, z \in \mathbb{R},
\end{equation*}
is the two-parameter Mittag-Leffler function.
For $\nu =1$, \eqref{telegrafo} coincides with the characteristic function of the telegraph process. For $k=3$ and $\nu \leq \frac{1}{3}$ in \eqref{foulapl2der} we obtain explicitly the Fourier transform of the solutions in terms of Mittag-Leffler functions and the Cardano roots $A$, $B$ and $C$ of the third order algebraic equations $\mu^{3\nu} + 2\lambda \mu^\nu + c^2 \left\| \bm{\xi} \right\|^{2\beta} = 0$. For $k>3$ we can write
\begin{equation*}
\widetilde{\widehat{w}}_{k\nu, \nu}^\beta \left( \bm{\xi}, \mu \right) \, = \, \frac{\mu^{k\nu-1}+2\lambda \mu^{\nu-1}}{\mu^{k\nu}+2\lambda \mu^\nu + c^2 \left\| \bm{\xi} \right\|^{2\beta}} = \mu^{\nu-1} \prod_{i=1}^k \frac{\mu^{\nu-1}}{\mu^\nu -Z_i} + 2\lambda \mu^{\nu-1} \prod_{i=1}^k \frac{1}{\mu^\nu -Z_i}
\end{equation*}
but the explict evaluation of $Z_i$ is, in general, impossible.

In \cite{orsann} the $n$-times iterated Brownian motion
\begin{equation*}
\mathfrak{I}_n (t) \, = \, B_1 \left( \left| B_2 \left( \left| B_3 \cdots \left( \left| B_{n+1} (t) \right| \right) \cdots \right| \right)  \right| \right), \qquad t>0,
\end{equation*}
is considered and its connection with the fractional diffusion equation
\begin{equation*}
\frac{\partial^{\frac{1}{2^n}}}{\partial t^{\frac{1}{2^n}}} u(x, t) = 2^{\frac{1}{2^n}-2} \frac{\partial^2}{\partial x^2} u(x, t)
\end{equation*}
investigated. Here we consider first the $n$-times iterated positively-skewed stable process $_jH^{\nu_j}$ with weights $\lambda_j>0$, $j=1, \cdots, m$,
\begin{equation}
\mathfrak{H}_r^{\nu_1, \cdots, \nu_m} (t) = \sum_{j=1}^m \lambda_j^{\frac{1}{\nu_j}} \, _1H^{\nu_j} \left( \, _2H^{\nu_j} \left( \, _3H^{\nu_j} \left( \cdots \, _{r}H^{\nu_j} (t) \cdots \right) \right) \right), \qquad t>0.
\label{1200}
\end{equation}
We construct the inverse of the process \eqref{1200} as follows
\begin{equation*}
\mathfrak{L}_r^{\nu_1, \cdots, \nu_m} (t) \, = \, \inf \left\lbrace s>0: \mathfrak{H}_r^{\nu_1, \cdots, \nu_m} (s) \geq t \right\rbrace, \qquad t>0.
\end{equation*}
We show that the probability density of the composition
\begin{equation*}
\bm{B}_n \left( c^2 \mathfrak{L}_r^{\nu_1, \cdots, \nu_m} (t) \right), \qquad t>0,
\end{equation*}
where $\bm{B}_n$ represents an $n$-dimensional Brownian motion independent from $\mathfrak{L}_r^{\nu_1, \cdots, \nu_m}(t)$, is the solution to the Cauchy problem for $\nu_j \in \left( 0,1 \right]$, $r \in \mathbb{N}$,
\begin{align*}
\begin{cases}
\sum_{j=1}^m \lambda_j \frac{^C\partial^{\nu_j^r}}{\partial t^{\nu_j^r}} \mathfrak{w}^{r}_{\nu_1, \cdots, \nu_m} (\bm{x},t) = c^2 \Delta \mathfrak{w}^{  r}_{\nu_1, \cdots, \nu_m} (\bm{x},t), \qquad \bm{x} \in \mathbb{R}^n, t>0, \\
\mathfrak{w}^{ r}_{\nu_1, \cdots, \nu_m} (\bm{x},0) \, = \, \delta(\bm{x}).
\end{cases}
\end{align*}
We show that for the number $r$ of iterations tending to infinity
\begin{equation*}
\bm{B}_n \left( c^2 \mathfrak{L}_{r}^{\nu_1, \cdots, \nu_m} (t) \right) \begin{array}{c} \textrm{law} \\ \Longrightarrow \\ r \to \infty  \end{array} X_{m,n},
\end{equation*}
where $X_{m,n}$ is a r.v. independent from $t$ and possesses density equal to
\begin{equation}
\frac{\Pr \left\lbrace X_{m,n} \in d\bm{x} \right\rbrace}{d\bm{x}} \, = \, \frac{1}{(2\pi)^{\frac{n}{2}}} \left(  \frac{\sqrt{\sum_{j=1}^m \lambda_j}}{c} \right)^{\frac{n+2}{2}}   \left\| \bm{x} \right\|^{-\frac{n-2}{2}} K_{\frac{n-2}{2}} \left( \frac{\sqrt{\sum_{j=1}^m \lambda_j}}{c} \left\| \bm{x} \right\|  \right),
\label{introkappa}
\end{equation}
where $K_\nu (x)$ is the modified Bessel function. For $n=1$ the distribution \eqref{introkappa} becomes the Gauss-Laplace law
\begin{equation}
\mathfrak{w}_m \left( x \right) \, = \, \frac{\sqrt{\sum_{j=1}^m \lambda_j}}{2c} e^{-\frac{\sqrt{\sum_{j=1}^m \lambda_j}}{c} |x|}.
\label{126}
\end{equation}
Result \eqref{126} was obtained also in \cite{orsann} and by a different approach for $\lambda_1 = 1$, $\lambda_j = 0$ for $j \geq 2$, $c = \frac{1}{2}$, was derived by Turban \cite{turbo} as the limit of iterated random walks.

\subsection{Preliminaries}
\subsubsection{One dimensional stable laws}
\label{prelone}
Let us consider a stable process, say $S^{\nu} (t)$, $t>0$, $\nu \in \left( 0,2 \right]$, $\nu \neq 1$, for which, in general,
\begin{equation}
\mathbb{E}e^{i\xi S^{\nu} (t)} \, = \, e^{-\sigma \left| \xi \right|^\nu t \left( 1-i\theta \textrm{sign} (\xi) \tan \frac{\nu \pi}{2} \right) }
\label{stabili1dim}
\end{equation}
where $\theta \in \left[ -1,1 \right]$ is the skewness parameter and $\sigma = \cos \frac{\pi \nu }{2}$. In this paper we consider positively skewed processes ($\theta =1$, $0 < \nu < 1$) say $H^\nu (t)$, $t>0$, whose characteristic function writes
\begin{equation}
\widehat{h}_\nu \left( \xi, t  \right) \, = \, \mathbb{E}e^{i \xi H^\nu (t)} \, = \, e^{-t \left| \xi \right|^\nu \cos \frac{\pi \nu}{2} \left( 1-i \, \textrm{sign}(\xi) \tan \frac{\pi \nu}{2} \right)} \, = \, e^{-t \left( \left| \xi \right| e^{-\frac{i \pi}{2} \textrm{sign}(\xi)} \right)^\nu} \,  = \, e^{-t \left( -i\xi \right)^\nu }
\label{fouriersubordinatore}
\end{equation}
where we used the fact that $\left| \xi \right| e^{-i\frac{\pi}{2} \textrm{sign}(\xi)} = -i\xi$. The process $H^\nu (t)$, $t>0$, has the important property of having non-negative, stationary and independent increments, and thus is suitable to play the role of a random time. The law $h_\nu (x, t)$, $x \geq 0$, of $H^\nu (t)$, $t>0$, with Fourier transform $\widehat{h}_\nu (\xi, t)$ and Laplace transform
\begin{equation}
\widetilde{h}_\nu (\mu, t) \, = \, e^{-t \mu^\nu},
\label{laplace di subord}
\end{equation}
solves the fractional diffusion equation, for $\nu \in \left( 0,1 \right]$,
\begin{align*}
	\begin{cases}
	\left( \frac{\partial}{\partial t} + \frac{\partial^\nu}{\partial x^\nu} \right) h_\nu (x, t) \, = \, 0, \qquad x > 0, t>0, \\
	h_\nu (x, 0) \, = \, \delta(x), \\
	h_\nu (0, t) \, = \, 0,
	\end{cases}
	\label{}
\end{align*}
where the fractional derivatives are intended in the Riemann-Liouville sense.
We notice that the process given by the composition of $r \in \mathbb{N}$ independent stable subordinators of the same order $\nu \in \left( 0,1 \right)$, say $_1H^\nu \left( _2H^\nu \left( \cdots  _rH^\nu (t) \cdots \right)  \right)$, $t>0$ has law which reads
\begin{align}
& \frac{\Pr \left\lbrace _1H^\nu \left( _2H^\nu \left( \cdots  _rH^\nu (t) \cdots \right)  \right) \in dx \right\rbrace}{dx} \, = \, \notag \\
 = \, & \int_0^\infty ds_1 \, _1h_\nu (x, s_1) \int_0^\infty ds_2 \, _2h_\nu (s_1, s_2) \, \int_0^\infty ds_3 \, _3h_\nu  (s_2, s_3) \cdots \int_0^\infty ds_{r} \, _rh_\nu (s_r, t).
\label{legge di r sub}
\end{align}

In view of \eqref{fouriersubordinatore} and \eqref{laplace di subord} we can easily write the Laplace and Fourier transforms of \eqref{legge di r sub}. For example the Laplace transform reads
\begin{align}
	& \mathbb{E}e^{-\mu \; _1H^\nu \left( _2H^\nu \left( \cdots  _rH^\nu (t) \cdots \right)  \right)} \, = \, \int_0^\infty dx \, e^{-\mu x} \Pr \left\lbrace _1H^\nu \left( _2H^\nu \left( \cdots  _rH^\nu (t) \cdots \right)  \right) \in dx \right\rbrace  \notag \\
	 = \, &	\int_0^\infty ds_1 \, e^{-s_1 \mu^\nu} \int_0^\infty ds_2 \, _2h_\nu (s_1, s_2) \int_0^\infty ds_3 \, _3h_\nu (s_2, s_3) \cdots \int_0^\infty ds_r \, _rh_\nu (s_r, t) \notag \\
	 = \, & \int_0^\infty ds_2 e^{-s_2 \mu^{\nu^2}} \int_0^\infty ds_3 \, _3h_\nu (s_2, s_3) \cdots \int_0^\infty ds_r \, _rh_\nu (s_r, t) \, = \, e^{-t \mu^{\nu^r}}
	\label{557}
\end{align}
and therefore we have the following Fourier transform
\begin{equation}
\mathbb{E}e^{- i \xi  \, _1H^\nu \left( _2H^\nu \left( \cdots  _rH^\nu (t) \cdots \right)  \right)} \, = \, e^{-t \left( -i \xi \right)^{\nu^r}}
\label{17}
\end{equation}

\subsubsection{Multidimensional stable laws and fractional Laplacian}
\label{prelmult}
Let us consider the isotropic $n$-dimensional process $\bm{S}_n^{2\beta} (t)$, $t>0$ , $\beta \in \left( 0,1 \right]$, with density
\begin{equation}
v_\beta \left( \bm{x}, t \right) \, = \, \frac{1}{(2\pi)^n} \int_{\mathbb{R}^n} d\bm{\xi} \, e^{-i \bm{\xi} \cdot \bm{x}} \, e^{-t \left\| \bm{\xi} \right\|^{2\beta}}, \qquad \bm{x} \in \mathbb{R}^n, t>0,
\label{legge vettore iso}
\end{equation}
and therefore characteristic function
\begin{equation*}
\widehat{v}_\beta \left( \bm{\xi}, t \right) \, = \, \mathbb{E}e^{i\bm{\xi} \cdot \bm{S}_n^{2\beta} (t)} \, = \, e^{-t \left\| \bm{\xi} \right\|^{2\beta}},
\end{equation*}
where the symbol $\left\| \cdot \right\|$ stands for the usual Euclidean norm. The law \eqref{legge vettore iso} is the solution to the fractional Cauchy problem, for $\beta \in \left( 0,1 \right]$
\begin{align}
\begin{cases}
\left( \frac{\partial}{\partial t} + \left( - \Delta \right)^\beta \right) v_\beta \left( \bm{x}, t \right) \, = \, 0, \qquad \bm{x} \in \mathbb{R}^n, t>0, \\
v_\beta (\bm{x}, 0) \, = \, \delta(\bm{x}).
\end{cases}
\label{equazione governante vettore}
\end{align}
The fractional Laplacian appearing in \eqref{equazione governante vettore} has been considered by many authors (see, for example, Balakrishnan \cite{balakrishnan},  Bochner \cite{bochner}). The Bochner representation of the fractional Laplacian reads
\begin{equation*}
- \left( -\Delta \right)^\beta \, = \,  \frac{\sin \pi \beta}{\pi} \int_0^\infty d\lambda \, \lambda^{\beta -1} \left( \lambda - \Delta \right)^{-1} \, \Delta.
\end{equation*}
Equivalently, an alternative definition can be given in the space of the Fourier transforms, as
\begin{equation*}
- \left( -\Delta \right)^\beta u(\bm{x}) \, = \, \frac{1}{(2\pi)^n} \int_{\mathbb{R}^n} e^{-i \bm{x} \cdot \bm{\xi}} \left\| \bm{\xi} \right\|^{2\beta} \widehat{u} \left( \bm{\xi} \right) \, d\bm{\xi}
\end{equation*}
where
\begin{equation*}
\textrm{Dom} \left( -\Delta \right)^\beta \, = \, \left\lbrace u \in L^1_{\textrm{loc}} \left( \mathbb{R}^n \right) : \int_{\mathbb{R}^n} \left| \widehat{u} \left( \bm{\xi} \right) \right|^2 \left( 1+ \left\| \bm{\xi} \right\|^{2\beta} \right) d\bm{\xi} < \infty \right\rbrace.
\end{equation*}
In the one-dimensional case and for $0< 2\beta < 1$ we have that (see, for example, D'Ovidio et al. \cite{toaldo} for details on this point),
\begin{equation*}
 \left( - \frac{\partial^{2}}{\partial x^2} \right)^\beta u(x) \, = \, \frac{\partial^{2\beta}}{\partial |x|^{2\beta}} u(x),
\end{equation*}
where $\frac{\partial^{2\beta}}{\partial |x|^{2\beta}}$ is the Riesz operator usually defined as
\begin{equation*}
\frac{\partial^{2\beta}}{\partial |x|^{2\beta}} u(x) \, = \, -\frac{1}{2\cos \beta \pi} \frac{1}{\Gamma \left( 1-2\beta \right)} \frac{d}{dx} \int_{-\infty}^\infty \frac{u(z)}{|x-z|^{2\beta}} \, dz
\end{equation*}
and for which the Fourier transform becomes
\begin{equation*}
\mathcal{F} \left[ \frac{\partial^{2\beta}}{\partial |x|^{2\beta}} u(x) \right] \, = \, - \left| \xi \right|^{2\beta} \widehat{u} \left( \xi \right).
\end{equation*}

\section{Generalized fractional equations}
\subsection{Linear combination of stable processes}
In this section we start by considering processes of the form
\begin{equation}
\mathpzc{H}^{\nu_1, \cdots, \nu_m} (t) \, = \, \sum_{j=1}^m \lambda_j^{\frac{1}{\nu_j}} H_j^{\nu_j} (t), \qquad t>0, \nu_j \in \left( 0,1 \right), j = 1, \cdots, m,
\label{H generico}
\end{equation}
where $ H_j^{\nu_j} (t) $, $t>0$, are independent stable subordinators of order $\nu_j \in \left( 0,1 \right)$ introduced in section \ref{prelone}. Furthermore we will deal with the inverse process of $\mathpzc{H}^{\nu_1, \cdots, \nu_m}$, say $\mathpzc{L}^{\nu_1, \cdots, \nu_m}$, which can be defined as the hitting time of $\mathpzc{H}^{\nu_1, \cdots, \nu_m}$ as
\begin{equation}
\mathpzc{L}^{\nu_1, \cdots, \nu_m} (t) \, = \,  \inf \left\lbrace s>0: \mathpzc{H}^{\nu_1, \cdots, \nu_m} (s) = \sum_{j=1}^m \lambda_j^{\frac{1}{\nu_j}} H_j^{\nu_j} (s) \geq t \right\rbrace, \qquad t>0.
\label{L come hitting}
\end{equation}
The definition \eqref{L come hitting} of the process $\mathpzc{L}^{\nu_1, \cdots, \nu_m}$ permits us to write
\begin{equation}
\Pr \left\lbrace \mathpzc{L}^{\nu_1, \cdots, \nu_m} (t) < x \right\rbrace \, = \, \Pr \left\lbrace \mathpzc{H}^{\nu_1, \cdots, \nu_m} (x) >t \right\rbrace.
\label{proprietà inverso}
\end{equation}
 We present the following two results.
\begin{te}
\label{teorema hedl}
We have that
\begin{enumerate}
\item[i)] The solution to the problem for $\nu_j \in \left( 0,1 \right)$, $j=1, \cdots, m$,
\begin{align}
\begin{cases}
\frac{\partial}{\partial t} \mathpzc{h}_{\nu_1, \cdots, \nu_m} (x, t) \, = \, -\sum_{j=1}^m \lambda_j \frac{\partial^{\nu_j}}{\partial x^{\nu_j}} \mathpzc{h}_{\nu_1, \cdots, \nu_m} (x, t), \qquad x > 0, t>0, \\
\mathpzc{h}_{\nu_1, \cdots, \nu_m} (x, 0) \, = \, \delta (x), \\
\mathpzc{h}_{\nu_1, \cdots, \nu_m} (0, t) \, = \, 0.
\end{cases}
\label{problema h}
\end{align}
is given by the density of the process $\mathpzc{H}^{\nu_1, \cdots, \nu_m} (t)$, $t>0$, defined in \eqref{H generico}.

\item[ii)] The solution to the problem for $\nu_j \in \left( 0, 1 \right)$, $j = 1, \cdots, m$,
\begin{align}
	\begin{cases}
	\sum_{j=1}^m \lambda_j  \frac{\partial^{\nu_j}}{\partial t^{\nu_j}} \mathpzc{l}_{\nu_1, \cdots, \nu_m} (x, t) \, = \, -\frac{\partial}{\partial x} \mathpzc{l}_{\nu_1, \cdots, \nu_m} (x, t), \qquad x > 0, t>0, \\
	\mathpzc{l}_{\nu_1, \cdots, \nu_m} (0, t) \, = \, \sum_{j=1}^m \lambda_j \frac{t^{-\nu_j}}{\Gamma \left( 1-\nu_j \right)},
	\end{cases}
	\label{problema L}
\end{align}
is given by the probability density of the process $\mathpzc{L}^{\nu_1, \cdots, \nu_m} (t)$, $t>0$, defined in \eqref{L come hitting}.

\end{enumerate}
The fractional derivatives appearing in \eqref{problema h} and \eqref{problema L} must be intended in the Riemann-Liouville sense.

\end{te}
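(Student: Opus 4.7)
Both statements are attacked via Laplace transforms, exploiting that for $\nu\in(0,1)$ the Riemann--Liouville derivative satisfies $\mathcal{L}\{\partial^\nu_t f\}(\mu)=\mu^\nu\widetilde{f}(\mu)-[I^{1-\nu}_t f]_{t=0^+}$, so whenever the appropriate fractional integral of $f$ vanishes at the origin one simply picks up the multiplier $\mu^\nu$.

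\emph{Part (i).} By independence of the subordinators $H_j^{\nu_j}$ together with \eqref{laplace di subord}, the Laplace transform in $x$ of the law of $\mathpzc{H}^{\nu_1,\cdots,\nu_m}(t)$ factorises as
\begin{equation*}
\widetilde{\mathpzc{h}}_{\nu_1,\cdots,\nu_m}(\mu,t)\,=\,\prod_{j=1}^m e^{-t\lambda_j\mu^{\nu_j}}\,=\,\exp\Bigl(-t\sum_{j=1}^m\lambda_j\mu^{\nu_j}\Bigr).
\end{equation*}
Laplace-transforming \eqref{problema h} in $x$ and invoking the boundary condition $\mathpzc{h}_{\nu_1,\cdots,\nu_m}(0,t)=0$, which kills the boundary contribution in the transform of every $\partial^{\nu_j}_x$, reduces the problem to the first-order ODE $\partial_t\widetilde{\mathpzc{h}}=-\sum_{j=1}^m\lambda_j\mu^{\nu_j}\widetilde{\mathpzc{h}}$ with $\widetilde{\mathpzc{h}}(\mu,0)=1$, whose unique solution is precisely the exponential above. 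Uniqueness of Laplace inversion closes the case.

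\emph{Part (ii).} I would start from \eqref{proprietà inverso}, which yields $\mathpzc{l}_{\nu_1,\cdots,\nu_m}(x,t)=\partial_x\Pr\{\mathpzc{H}^{\nu_1,\cdots,\nu_m}(x)\geq t\}$. Taking the Laplace transform in $t$ and applying Fubini in combination with the result of Part (i) produces
\begin{equation*}
\widetilde{\mathpzc{l}}_{\nu_1,\cdots,\nu_m}(x,\mu)\,=\,\frac{\Psi(\mu)}{\mu}\,e^{-x\Psi(\mu)},\qquad \Psi(\mu)\,:=\,\sum_{j=1}^m\lambda_j\mu^{\nu_j}.
\end{equation*}
Setting $x=0$ and inverting term by term via $\mathcal{L}^{-1}\{\mu^{\nu_j-1}\}(t)=t^{-\nu_j}/\Gamma(1-\nu_j)$ recovers the boundary datum of \eqref{problema L}. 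Finally, Laplace-transforming \eqref{problema L} in $t$, the initial terms $[I^{1-\nu_j}_t\mathpzc{l}(x,\cdot)]_{t=0^+}$ vanish for every $x>0$ because $\mathpzc{L}^{\nu_1,\cdots,\nu_m}(0)=0$ almost surely, and the PDE collapses to $\Psi(\mu)\widetilde{\mathpzc{l}}=-\partial_x\widetilde{\mathpzc{l}}$, which is an immediate algebraic identity on the formula above.

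\emph{Main obstacle.} The delicate point is the rigorous justification of the vanishing of the initial/boundary terms produced by the Laplace transform of the Riemann--Liouville derivatives, together with the attached uniqueness statements: one must argue that the prescribed boundary data single out the probabilistic solution among all distributional candidates. Once this is granted, the remaining verifications are routine manipulations of exponential Laplace transforms.
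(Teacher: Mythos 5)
Your proposal is correct and follows essentially the same strategy as the paper: compute an integral transform of the law of the process (via \eqref{fouriersubordinatore}--\eqref{laplace di subord} and the inversion relation \eqref{proprietà inverso}) and check that it solves the transformed version of the stated boundary-value problem, with the transform of the Riemann--Liouville derivative supplying the multiplier $\mu^{\nu_j}$ (resp.\ $(-i\xi)^{\nu_j}$). The only, inessential, differences are that you use the $x$-Laplace transform where the paper uses the Fourier transform in part (i), and in part (ii) you verify the single $t$-Laplace-transformed ODE on the closed form $\frac{\Psi(\mu)}{\mu}e^{-x\Psi(\mu)}$ where the paper matches double Laplace transforms; your closing caveat about the vanishing of the fractional-integral boundary terms is handled at the same (informal) level in the paper itself.
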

\begin{proof}[Proof of i)] Since for the Riemann-Liouville fractional derivative we have that
\begin{equation}
\mathcal{F} \left[ \frac{\partial^\nu}{\partial x^\nu} u(x) \right] \left( \xi \right) \, = \, \left( -i \xi \right)^\nu \widehat{u}(x)
\label{fourier di rl}
\end{equation}
we can write the Fourier transform of the problem \eqref{problema h} as
\begin{align*}
\frac{\partial}{\partial t} \widehat{\mathpzc{h}}_{\nu_1, \cdots, \nu_m} \left( \xi, t \right) \, = \, & -\mathcal{F} \left[ \sum_{j=1}^m \lambda_j \frac{\partial^{\nu_j}}{\partial x^{\nu_j}} \mathpzc{h}_{\nu_1, \cdots, \nu_m} (x, t)  \right] \left( \xi \right) \, = \,  \sum_{j=1}^m \lambda_j \left( -i \xi \right)^{\nu_j} \widehat{\mathpzc{h}}_{\nu_1, \cdots, \nu_m} \left( \xi, t \right),
 \end{align*}
and therefore we have that
\begin{align}
\begin{cases}
\frac{\partial}{\partial t} \widehat{h}_{\nu_1, \cdots, \nu_m} (\xi, t) \, = \, \sum_{j=1}^m \lambda_j \left( -i \xi \right)^{\nu_j} \widehat{\mathpzc{h}}_{\nu_1, \cdots, \nu_m} \left( \xi, t \right) \\
\widehat{h}_{\nu_1, \cdots, \nu_m} (\xi, 0) \, = \, 1.
\end{cases}
\label{fourier del problema di h}
\end{align}
The Fourier transform of the law $\mathpzc{h}_{\nu_1, \cdots, \nu_m} (x, t)$ of the process \eqref{H generico} is written as
 \begin{align}
\mathbb{E}e^{i \xi \mathpzc{H}^{\nu_1, \cdots, \nu_m} (t)} \, = \, \mathbb{E}e^{i\xi \sum_{j=1}^m \lambda_j^{\frac{1}{\nu_j}} H_j^{\nu_j} (t)} \,  \stackrel{\eqref{fouriersubordinatore}}{=} \, e^{- t \sum_{j=1}^m \lambda_j \left( - i \xi \right)^{\nu_j}}.
\label{tantamount}
 \end{align}
for which
\begin{align*}
	\frac{\partial}{\partial t} \mathbb{E}e^{i \xi \mathpzc{H}^{\nu_1, \cdots, \nu_m} (t)} \, = \, \sum_{j=1}^m \lambda_j \left( - i \xi \right)^{\nu_j} e^{-t \sum_{j=1}^m \lambda_j \left( - i \xi \right)^{\nu_j}}.
	\label{}
\end{align*}
This is tantamount to saying that the Fourier transform of $\mathpzc{h}_{\nu_1, \cdots, \nu_m} (x, t)$ is the solution to the problem \eqref{fourier del problema di h} and thus $\mathpzc{h}_{\nu_1, \cdots, \nu_m} (x, t)$ is the solution to \eqref{problema h}.
\end{proof}
\begin{proof}[Proof of ii)] 
In this proof we will make use of the Laplace transform of the Riemann-Liouville fractional derivative which, in view of \eqref{fourier di rl}, can be written as
\begin{equation*}
\mathcal{L} \left[ \frac{\partial^\nu}{\partial t^\nu} u(t) \right] \left( \mu \right) \, = \, \mu^\nu \widetilde{u} \left( \mu \right).
\end{equation*} 
Taking the Laplace transform of \eqref{problema L} with respect to $t$ we get
\begin{align}
	\sum_{j=1}^m \lambda_j \mu^{\nu_j} \widetilde{\mathpzc{l}}_{\nu_1, \cdots, \nu_m} (x, \mu) \, = \, -\frac{\partial}{\partial x} \widetilde{\mathpzc{l}}_{\nu_1, \cdots, \nu_m} (x, \mu),
	\label{t laplace L}
\end{align}
and performing the $x$-Laplace transform of \eqref{t laplace L} we arrive at
\begin{equation}
\sum_{j=1}^m \lambda_j \mu^{\nu_j} \widetilde{\widetilde{\mathpzc{l}}}_{\nu_1, \cdots, \nu_m} (\gamma, \mu) \, = \, \widetilde{\mathpzc{l}}_{\nu_1, \cdots, \nu_m} (0, \mu) - \gamma \widetilde{\widetilde{\mathpzc{l}}}_{\nu_1, \cdots, \nu_m} (\gamma, \mu).
\label{appearing in}
\end{equation}
The boundary condition appearing in \eqref{appearing in} can be derived from \eqref{problema L} as
\begin{equation*}
\widetilde{\mathpzc{l}}_{\nu_1, \cdots, \nu_m} (0, \mu) \, = \, \int_0^\infty dt \, e^{-\mu t} \sum_{j=1}^m \lambda_j \frac{t^{-\nu_j}}{\Gamma \left( 1-\nu_j \right)} \, = \, \sum_{j=1}^m {\lambda_j} \mu^{\nu_j-1}
\end{equation*}
and thus from \eqref{appearing in} we have that
\begin{equation}
\widetilde{\widetilde{\mathpzc{l}}}_{\nu_1, \cdots, \nu_m} (\gamma, \mu) \, = \, \frac{\sum_{j=1}^m {\lambda_j} \mu^{\nu_j-1}}{\sum_{j=1}^m {\lambda_j} \mu^{\nu_j} + \gamma}.
\label{analitical L}
\end{equation}
Now we show that the Fourier-Laplace transform of the law of the process $\mathpzc{L}^{\nu_1, \cdots, \nu_m} (t)$, $t>0$, coincides with \eqref{analitical L}. By taking into account the property \eqref{proprietà inverso} of the law of $\mathpzc{L}^{\nu_1, \cdots, \nu_m}$, we obtain
\begin{align*}
	 \widetilde{\widetilde{\mathpzc{l}}}_{\nu_1, \cdots, \nu_m} (\gamma, \mu) \, = \, & \int_0^\infty dt \, e^{-\mu t} \int_0^\infty dx \, e^{-\gamma x}  \mathpzc{l}_{\nu_1, \cdots, \nu_m} (x, t)  \notag \\ 
	\, = & \int_0^\infty dt \, e^{-\mu t} \int_0^\infty dx \, e^{-\gamma x} \left[ -\frac{\partial}{\partial x} \int_0^t dz \, \mathpzc{h}_{\nu_1, \cdots, \nu_m} (z, x) \right] \notag \\
	 = \, & - \frac{1}{\mu} \int_0^\infty dx \, e^{-\gamma x} \left[ \frac{\partial}{\partial x} \widetilde{\mathpzc{h}}_{\nu_1, \cdots, \nu_m}(\mu, x) \right] \notag \\
	 = \, &   - \frac{1}{\mu} \int_0^\infty dx \, e^{-\gamma x} \left[ \frac{\partial}{\partial x} \mathbb{E}e^{-\mu \sum_{j=1}^m \lambda_j^{\frac{1}{\nu_j}} H_j^{\nu_j} (x)} \right] \notag \\
 		\stackrel{\eqref{laplace di subord}}{=} \, &  - \frac{1}{\mu} \int_0^\infty dx \, e^{-\gamma x} \left[ \frac{\partial}{\partial x} e^{-x \sum_{j=1}^m \lambda_j \mu^{\nu_j}} \right] \, = \,  \frac{\sum_{j=1}^m \lambda_j \mu^{\nu_j-1}}{\sum_{j=1}^m \lambda_j \mu^{\nu_j} + \gamma},
	\label{}
\end{align*}
which coincides with \eqref{analitical L}. The proof of Theorem \ref{teorema hedl} is thus concluded.
\end{proof}

\subsection{Generalized fractional telegraph-type equations}
In this section we study equations of the form
\begin{equation}
\sum_{j=1}^m \lambda_j \frac{^C\partial^{\nu_j}}{\partial t^{\nu_j}} w_{\nu_1, \cdots, \nu_m}^\beta (\bm{x}, t) \, = \, -c^2 \left( -\Delta \right)^\beta w_{\nu_1, \cdots, \nu_m}^\beta (\bm{x}, t), \qquad \bm{x} \in \mathbb{R}^n, t>0,
\label{telegrafo generico}
\end{equation}
for $\nu_j \in \left( 0,1 \right]$, $j=1, \cdots, m$, $\beta \in \left( 0,1 \right]$, $c>0$, $\lambda >0$. The symbol $\frac{^C\partial^\nu}{\partial t^\nu}$ stands for the Dzerbayshan-Caputo fractional derivative. Equation \eqref{telegrafo generico} generalizes the telegraph equation in that an arbitrary number $m$ of time-fractional derivatives appears and the $n$-dimensional fractional Laplacian governs the space fluctuations. Concerning the equation \eqref{telegrafo generico} we present the following result.
\begin{te}
\label{teorema principe}
The solution to the problem for $\nu_j \in \left( 0, 1 \right]$, $j = 1, \cdots, m$, $\beta \in \left( 0, 1 \right]$,
\begin{align}
\begin{cases}
\sum_{j=1}^m \lambda_j \frac{^C\partial^{\nu_j}}{\partial t^{\nu_j}} w_{\nu_1, \cdots, \nu_m}^\beta (\bm{x}, t) \,  = \, -c^2 \left( -\Delta \right)^\beta w_{\nu_1, \cdots, \nu_m}^\beta (\bm{x}, t), \qquad x \in \mathbb{R}^n, t>0, \\
w_{\nu_1, \cdots, \nu_m}^\beta (\bm{x}, 0 ) \, = \, \delta \left( \bm{x} \right).
\end{cases}
\label{+generico}
\end{align}
is given by the law of the process
\begin{equation}
\bm{W}^{\nu_1, \cdots, \nu_m}_n (t) \, = \, \bm{S}_n^{2\beta} \left( c^2 \mathpzc{L}^{\nu_1, \cdots, \nu_m} (t) \right), \qquad t>0,
\label{processo composto}
\end{equation}
where $\bm{S}_n^{2\beta}$ is the isotropic vector process dealt with in section \ref{prelmult} and $\mathpzc{L}^{\nu_1, \cdots, \nu_m} (t)$, $t>0$ is the process defined in \eqref{L come hitting}.
\end{te}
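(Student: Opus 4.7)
The natural approach is to match Fourier--Laplace transforms: I would compute $\widetilde{\widehat{w}}_{\nu_1,\ldots,\nu_m}^\beta(\bm{\xi},\mu)$ from the PDE side and then compute the Fourier--Laplace transform of the density of the subordinated process $\bm{S}_n^{2\beta}(c^2\mathpzc{L}^{\nu_1,\ldots,\nu_m}(t))$ from the probabilistic side, and verify the two agree.

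On the analytic side, I would take the Fourier transform of \eqref{+generico} in $\bm{x}$, using the prescription $\mathcal{F}[(-\Delta)^\beta u](\bm{\xi})=\|\bm{\xi}\|^{2\beta}\widehat{u}(\bm{\xi})$ recalled in Section \ref{prelmult}, and then the Laplace transform in $t$, using the standard rule for the Dzerbayshan--Caputo derivative of order $\nu_j\in(0,1]$, namely $\mathcal{L}[{}^C\partial_t^{\nu_j}f](\mu)=\mu^{\nu_j}\widetilde{f}(\mu)-\mu^{\nu_j-1}f(0)$. Plugging in the initial condition $\widehat{w}_{\nu_1,\ldots,\nu_m}^\beta(\bm{\xi},0)=1$ and solving the resulting algebraic equation gives
\begin{equation*}
\widetilde{\widehat{w}}_{\nu_1,\ldots,\nu_m}^\beta(\bm{\xi},\mu)\,=\,\frac{\sum_{j=1}^m\lambda_j\mu^{\nu_j-1}}{\sum_{j=1}^m\lambda_j\mu^{\nu_j}+c^2\|\bm{\xi}\|^{2\beta}}.
\end{equation*}

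On the probabilistic side, conditioning on $\mathpzc{L}^{\nu_1,\ldots,\nu_m}(t)$ and independence give the density
\begin{equation*}
p(\bm{x},t)\,=\,\int_0^\infty v_\beta(\bm{x},c^2 s)\,\mathpzc{l}_{\nu_1,\ldots,\nu_m}(s,t)\,ds,
\end{equation*}
so by the characteristic function $\widehat{v}_\beta(\bm{\xi},c^2 s)=e^{-c^2 s\|\bm{\xi}\|^{2\beta}}$ recalled in Section \ref{prelmult}, and then Fubini in $(s,t)$,
\begin{equation*}
\widetilde{\widehat{p}}(\bm{\xi},\mu)\,=\,\int_0^\infty e^{-c^2 s\|\bm{\xi}\|^{2\beta}}\,\widetilde{\mathpzc{l}}_{\nu_1,\ldots,\nu_m}(s,\mu)\,ds.
\end{equation*}
This last integral is precisely the spatial Laplace transform at $\gamma=c^2\|\bm{\xi}\|^{2\beta}$ of $\mathpzc{l}_{\nu_1,\ldots,\nu_m}$, so by part ii) of Theorem \ref{teorema hedl} (specifically the formula \eqref{analitical L}) it equals
\begin{equation*}
\frac{\sum_{j=1}^m\lambda_j\mu^{\nu_j-1}}{\sum_{j=1}^m\lambda_j\mu^{\nu_j}+c^2\|\bm{\xi}\|^{2\beta}},
\end{equation*}
matching the analytical expression; uniqueness of the Fourier--Laplace transform then yields $w_{\nu_1,\ldots,\nu_m}^\beta=p$.

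The computation is essentially routine once Theorem \ref{teorema hedl} is available; the one point requiring some care is the correct Caputo Laplace rule (with the $\mu^{\nu_j-1}$ boundary term, which is exactly what furnishes the numerator appearing in the transform), and the Fubini step to exchange the $t$-Laplace and $s$-integration in the subordination formula, which is justified by positivity of the integrand. The main substantive obstacle, I expect, is simply making sure both sides carry the same numerator: the PDE side produces $\sum_j\lambda_j\mu^{\nu_j-1}$ from the initial condition via the Caputo rule, while the probabilistic side produces the same factor through the boundary value $\mathpzc{l}_{\nu_1,\ldots,\nu_m}(0,t)=\sum_j\lambda_j t^{-\nu_j}/\Gamma(1-\nu_j)$ that was established in Theorem \ref{teorema hedl} ii). Once these are seen to coincide, the identification of $w_{\nu_1,\ldots,\nu_m}^\beta$ with the law of $\bm{W}_n^{\nu_1,\ldots,\nu_m}$ follows immediately.
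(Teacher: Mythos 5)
Your proposal is correct and follows essentially the same route as the paper: Fourier--Laplace transform of the Cauchy problem via the Caputo rule on one side, and the subordination formula $\int_0^\infty v_\beta(\bm{x},c^2 s)\,\mathpzc{l}_{\nu_1,\ldots,\nu_m}(s,t)\,ds$ on the other, matched through the double Laplace transform of the law of $\mathpzc{L}^{\nu_1,\ldots,\nu_m}$. The only (harmless) difference is that you cite the already-established formula \eqref{analitical L} at $\gamma=c^2\|\bm{\xi}\|^{2\beta}$, whereas the paper re-derives that same integral inline by writing $\mathpzc{l}_{\nu_1,\ldots,\nu_m}(s,t)=-\frac{\partial}{\partial s}\int_0^t\mathpzc{h}_{\nu_1,\ldots,\nu_m}(z,s)\,dz$ and repeating the computation from Theorem \ref{teorema hedl}.
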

\begin{proof}
Since for the Dzerbayshan-Caputo fractional derivative we have that,
\begin{equation*}
\mathcal{L} \left[ \frac{^C\partial^\nu}{\partial t^\nu} u(t) \right] \left( \mu \right) \, = \, \mu^\nu \widetilde{u}(\mu) - \mu^{\nu-1} u(0), \qquad \nu \in \left( 0,1 \right),
\end{equation*}
we can write the Laplace transform of \eqref{+generico} as
\begin{equation*}
\sum_{j=1}^m \lambda_j \mu^{\nu_j} \widetilde{w}_{\nu_1, \cdots, \nu_m}^\beta \left( x, \mu \right) - \sum_{j=1}^m \lambda_j \mu^{\nu_j-1} \delta(\bm{x}) \, = \, -c^2 \left( -\Delta \right)^\beta \widetilde{w}_{\nu_1, \cdots, \nu_m}^\beta \left( \bm{x}, \mu \right).
\end{equation*}
The Fourier-Laplace transform of \eqref{+generico} is therefore written as
\begin{equation}
\widehat{\widetilde{w}}_{\nu_1, \cdots, \nu_m}^\beta \left( \bm{\xi}, \mu \right) \, = \, \frac{\sum_{j=1}^m \lambda_j \mu^{\nu_j -1}}{\sum_{j=1}^m \lambda_j \mu^{\nu_j} + c^2 \left\| \bm{\xi} \right\|^{2\beta}}.
\label{f-l +generico}
\end{equation}
By considering \eqref{proprietà inverso} we can derive the Fourier-Laplace transform of the process \eqref{processo composto}. We have that
\begin{align*}
	\widehat{\widetilde{w}}_{\nu_1, \cdots, \nu_m}^\beta \left( \bm{\xi}, \mu \right) \, = & \, \int_{\mathbb{R}^n} d\bm{x} \, e^{i \bm{\xi} \cdot \bm{x}} \int_0^\infty dt \, e^{-\mu t} \int_0^\infty ds \, v_\beta \left( \bm{x}, c^2s \right) \, \mathpzc{l}_{\nu_1, \cdots, \nu_m} \left( s, t \right) \notag \\
	= \, & \int_0^\infty ds \, e^{-sc^2 \left\| \bm{\xi} \right\|^{2\beta}} \int_0^\infty dt \, e^{-\mu t} \left[- \frac{\partial}{\partial s} \int_0^t \mathpzc{h}_{\nu_1, \cdots, \nu_m} (z, s) \, dz \right]  \notag \\
	= \, & - \frac{1}{\mu}\int_0^\infty ds \, e^{-s c^2 \left\| \bm{\xi} \right\|^{2\beta}} \left(  \frac{\partial}{\partial s} \widetilde{\mathpzc{h}}_{\nu_1, \cdots, \nu_m} (\mu, s) \right) \notag \\
= \, & - \frac{1}{\mu}\int_0^\infty ds \, e^{-s c^2 \left\| \bm{\xi} \right\|^{2\beta}} \left(  \frac{\partial}{\partial s} \mathbb{E}e^{-\mu \sum_{j=1}^m \lambda_j H_j^{\nu_j} (s)} \right) \notag \\
\stackrel{\eqref{laplace di subord}}{=} \, & - \frac{1}{\mu}\int_0^\infty ds \, e^{-s c^2 \left\| \bm{\xi} \right\|^{2\beta}} \left(  \frac{\partial}{\partial s} e^{-s \sum_{j=1}^m \lambda_j \mu^{\nu_j}} \right) \notag \\
= \, & \sum_{j=1}^m \lambda_j \mu^{\nu_j-1} \int_0^\infty ds \, e^{-s c^2 \left\| \bm{\xi} \right\|^{2\beta}-s \sum_{j=1}^m \lambda_j \mu^{\nu_j}} \, = \,  \frac{\sum_{j=1}^m \lambda_j \mu^{\nu_j-1}}{ \sum_{j=1}^m \lambda_j \mu^{\nu_j} + c^2 \left\| \bm{\xi} \right\|^{2\beta}} \notag \\
& = \, \eqref{f-l +generico}.
	\label{}
\end{align*}
Since the Fourier-Laplace transform of the problem \eqref{+generico} coincides with that of the law of the process $\bm{S}_n^{2\beta} \left( c^2 \mathpzc{L}^{\nu_1, \cdots, \nu_m} (t) \right)$, $t>0$, the proof is complete.
\end{proof}
	
\subsection{Telegraph-type equations with two time-fractional derivatives}
When in the equation \eqref{telegrafo generico} only two time derivatives appear we can rewrite the problem, for $\alpha, \nu \in \left( 0,1 \right]$ as
\begin{align}
	\begin{cases}
	\left( \frac{^C\partial^\alpha}{\partial t^\alpha} + 2 \lambda \frac{^C\partial^\nu }{\partial t^\nu} \right) w_{\alpha,\nu}^\beta(\bm{x},t) \, = \, -c^2 \left(- \Delta \right)^\beta w_{\alpha,\nu}^\beta(\bm{x}, t), \qquad \bm{x} \in \mathbb{R}^n, t>0,  \\
	w_{\alpha, \nu}^\beta(\bm{x}, 0) \, = \, \delta(\bm{x}).
	\end{cases}
	\label{duederivate}
\end{align}
For $\alpha = 2\nu$, $\nu \in \left( 0, \frac{1}{2} \right]$ the reader can recongnize in \eqref{duederivate} the standard form of the classical fractional telegraph equation, investigated from a probabilistic point of view in \cite{ptrf} (for $n=1$ and $\beta =1$) and in \cite{toaldo} (for $n \in \mathbb{N}$ and $\beta \in \left( 0,1 \right)$). In view of Theorem \ref{teorema principe} it is not difficult to prove the following result.
	\begin{coro}
	\label{corollario}
	The solution of the fractional Cauchy problem \eqref{duederivate} is given by the probability density of the process
	\begin{equation}
	\bm{W}_n^{\alpha, \nu} (t)  \, \, = \, \bm{S}_n^{2\beta} \left( c^2 \mathpzc{L}^{\alpha, \nu} (t) \right), \qquad t>0,
	\label{process alfa nu}
	\end{equation}
	where
\begin{equation*}
\mathpzc{L}^{\alpha, \nu} (t) \, = \, \inf \left\lbrace s>0: \mathpzc{H}^{\alpha, \nu} (s) = H_1^\alpha + \left( 2\lambda \right)^{\frac{1}{\nu}} H_2^{\nu} (s) \geq  t \right\rbrace,
\end{equation*}
for $H_1^\alpha$ and $H_2^\alpha$ independent stable subordinators.
	\end{coro}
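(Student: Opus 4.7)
The plan is to derive Corollary \ref{corollario} as a direct specialization of Theorem \ref{teorema principe} to the case $m=2$, with the parameter identification $\nu_1 = \alpha$, $\nu_2 = \nu$, $\lambda_1 = 1$, and $\lambda_2 = 2\lambda$. No new analysis is needed beyond verifying that the objects in the corollary match the general objects in the theorem under this choice.

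First I would check the equation. With the above identification, the left-hand side of \eqref{+generico} becomes
\begin{equation*}
\sum_{j=1}^2 \lambda_j \frac{^C\partial^{\nu_j}}{\partial t^{\nu_j}} w = \frac{^C\partial^\alpha}{\partial t^\alpha} w + 2\lambda \frac{^C\partial^\nu}{\partial t^\nu} w,
\end{equation*}
which is precisely the left-hand side of \eqref{duederivate}; the right-hand sides and the initial condition $w(\bm{x},0)=\delta(\bm{x})$ match verbatim. Thus \eqref{duederivate} is the $m=2$ case of problem \eqref{+generico}.

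Second I would match the inverse process. Plugging the same $\lambda_j, \nu_j$ into the definitions \eqref{H generico} and \eqref{L come hitting} gives
\begin{equation*}
\mathpzc{H}^{\nu_1,\nu_2}(s) = 1^{1/\alpha} H_1^\alpha(s) + (2\lambda)^{1/\nu} H_2^\nu(s) = H_1^\alpha(s) + (2\lambda)^{1/\nu} H_2^\nu(s),
\end{equation*}
so $\mathpzc{L}^{\nu_1,\nu_2}(t)$ coincides with the $\mathpzc{L}^{\alpha,\nu}(t)$ defined in the corollary. (The trivial distinction between $\geq t$ and $>t$ in the infimum is immaterial since the one-dimensional marginals of $\mathpzc{H}^{\alpha,\nu}$ are absolutely continuous, so the two hitting times agree almost surely.) Consequently the composed process \eqref{processo composto} specializes exactly to \eqref{process alfa nu}.

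With both matches in hand, the conclusion follows by invoking Theorem \ref{teorema principe} for $m=2$: the law of $\bm{S}_n^{2\beta}(c^2 \mathpzc{L}^{\alpha,\nu}(t))$ solves \eqref{duederivate}. There is really no hard step here; the only thing to write carefully is the parameter dictionary and the remark about $\geq$ versus $>$. If desired one could also reprove the result from scratch by computing the Fourier-Laplace transform of the composition and recognizing that it equals $(\mu^{\alpha-1}+2\lambda \mu^{\nu-1})/(\mu^\alpha + 2\lambda \mu^\nu + c^2\|\bm{\xi}\|^{2\beta})$, mirroring the computation in the proof of Theorem \ref{teorema principe}, but this is redundant given that the theorem has already been established.
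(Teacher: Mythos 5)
Your proposal is correct and matches the paper's own proof, which likewise reduces the corollary to Theorem \ref{teorema principe} by setting $\lambda_1 = 1$, $\lambda_2 = 2\lambda$ and $\lambda_j = 0$ for $j>2$. Your version merely spells out the parameter dictionary and the innocuous $\geq$ versus $>$ point more explicitly than the paper does.
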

	\begin{proof}
The proof of this result can be carried out by repeating the arguments of Theorem \ref{teorema principe} and will not be reported here. It is sufficient to assume that $\lambda_1 = 1$, $\lambda_2 = 2\lambda$, $\lambda>0$ and $\lambda_j = 0$ for $j>2$.
\end{proof}

	\subsection{The case $\alpha = k \nu$}	
	Let us consider $\alpha = k \nu$, $\nu \in \left( 0, \frac{1}{k} \right]$, $k \in \mathbb{N}$, in \eqref{duederivate}. The problem becomes
	\begin{align}
	\begin{cases}
	\left( \frac{^C\partial^{k\nu}}{\partial t^{k\nu}} + 2 \lambda \frac{^C\partial^\nu }{\partial t^\nu} \right) w_{k\nu, \nu}^\beta(\bm{x},t) \, = \, -c^2 \left(- \Delta \right)^\beta w_{k\nu, \nu}^\beta(\bm{x}, t), \qquad \bm{x} \in \mathbb{R}^n, t>0,  \\
	w_{k\nu, \nu}^\beta(\bm{x}, 0) \, = \, \delta(\bm{x}).
	\end{cases}
	\label{nnu}
\end{align}
In view of Corollary \ref{corollario} the solution to \eqref{nnu} is given by the probability density of the process $\bm{S}_n^{2\beta} \left( c^2 \mathpzc{L}^{k\nu,\nu} (t) \right)$, $t>0$.
The Fourier-Laplace transform of $w_{k\nu, \nu}^\beta(\bm{x}, t)$ can be now written as
\begin{align*}
	& \widehat{\widetilde{w}}_{k\nu, \nu}^\beta  \left( \bm{\xi}, \mu \right)\, = \, \frac{\mu^{k \nu -1} + 2 \lambda \mu^{\nu -1}}{\mu^{n\nu} + 2\lambda \mu^\nu + c^2 \left\| \bm{\xi} \right\|^{2\beta}} \, = \, \mu^{n-1} \prod_{i=1}^k \frac{\mu^{\nu -1}}{\mu^\nu - Z_i} \, + \, 2\lambda \mu^{\nu -1} \prod_{i=1}^k \frac{1}{\mu^\nu - Z_i}
	\label{}
	\end{align*}
	where $Z_i$ are the roots of $ \mu^{k\nu} + 2\lambda \mu^\nu + c^2 \left\| \bm{\xi} \right\|^{2\beta}  =  0 $.
	
For $k=3$ we get
	\begin{align}
	\widehat{\widetilde{w}}_{3\nu, \nu}^\beta \left( \bm{\xi}, \mu \right) \, = \,  \frac{\mu^{3\nu -1}}{\mu^\nu - A} \, \frac{1}{\mu^\nu - B} \, \frac{1}{\mu^\nu - C}  +  2 \lambda \frac{\mu^{\nu -1}}{\mu^\nu - A} \,  \frac{1}{\mu^\nu - B} \, \frac{1}{\mu^\nu - C},
	\label{f-l 3}
	\end{align}
	where $A, \, B$ and $C$ are the solutions to $\mu^{3\nu} + 2\lambda \mu^\nu + c^2 \left\| \bm{\xi} \right\|^{2\beta}  =  0$.
Formula \eqref{f-l 3} can be rewritten as
\begin{align}
	\widehat{\widetilde{w}}_{3\nu, \nu}^\beta \left( \bm{\xi}, \mu \right)   \, = \, & \frac{ \left( \mu^{3\nu -1} + 2\lambda \mu^{\nu -1} \right)}{\mu^\nu - A}  \left[ \left( \frac{1}{\mu^\nu - B} - \frac{1}{\mu^\nu - C} \right) \frac{1}{B-C} \right]  \notag \\
	 = \, & \left( \mu^{3\nu -1} + 2\lambda \mu^{\nu -1} \right) \left[ \left( \frac{1}{\mu^\nu - A } - \frac{1}{ \mu^\nu - B} \right) \frac{1}{\left( A-B \right) \left( B-C \right)} \right. \notag \\
	 & - \left. \left( \frac{1}{\mu^\nu - A} - \frac{1}{\mu^\nu - C} \right)  \frac{1}{\left( A-C \right) \left( B-C \right)} \right] \notag \\
	 = \, & \left( \mu^{3\nu -1} + 2\lambda \mu^{\nu -1} \right) \left[ \frac{1}{\mu^{\nu}-A} \frac{1}{\left( B-A \right) \left( C-A \right)} + \frac{1}{\mu^\nu - B} \frac{1}{\left( A-B \right) \left( C-B \right)} \right. \notag \\
	 & + \left. \frac{1}{\mu^\nu -C} \frac{1}{\left( A-C \right) \left( B-C \right)} \right].
	\label{f-l 3 riscritta}
\end{align}
By considering now the relationship
\begin{equation*}
	\int_0^\infty e^{-\mu t} t^{(1-2\nu)-1} E_{\nu, 1-2\nu} \left( C t^\nu \right) dt \, = \, \frac{\mu^{\nu - (1-2\nu)}}{\mu^\nu - C}
	\label{}
\end{equation*}
we can invert \eqref{f-l 3 riscritta} with respect to $\mu$. Thus we can explicitly write the characteristic function of the process $\bm{S}_n^{2\beta} \left( c^2 \mathpzc{L}^{3\nu, \nu} (t) \right)$, $t>0$, as
\begin{align*}
\mathbb{E}e^{i\bm{\xi} \cdot \bm{S}_n^{2\beta} \left( c^2 \mathpzc{L}^{3\nu, \nu} (t) \right) } =  &  \frac{ t^{-2\nu} E_{\nu, 1-2\nu} \left( A t^\nu \right) + 2\lambda E_{\nu, 1} \left( A t^\nu \right) }{\left( B-A \right) \left( C-A \right) } + \frac{t^{-2\nu} E_{\nu, 1-\nu} \left( Bt^\nu \right) + 2\lambda E_{\nu,1} \left( Bt^\nu \right)}{\left( A-B \right) \left( C-B \right)} \notag \\
 & + \frac{t^{-2\nu} E_{\nu, 1-2\nu} \left( Ct^\nu \right) + 2\lambda E_{\nu,1} \left( Ct^\nu \right)}{\left( A-C \right) \left( B-C \right)}.
\end{align*}

\section{Multidimensional Gauss-Laplace distributions and infinite compositions}
In \cite{orsann} the authors have shown that the process 
\begin{equation*}
\mathfrak{I}_n (t) \, = \, B_1 \left( \left| B_2 \left( \left| B_3 \cdots \left( \left| B_{n+1} (t) \right| \right) \cdots \right| \right)  \right| \right), \qquad t>0,
\end{equation*}
converges in distribution for $n \to \infty$ to a Gauss-Laplace (or bilateral exponential) random variable independent from $t>0$. In this section we show that the process $\bm{B}_n \left( \mathfrak{L}_{r}^{\nu_1, \cdots, \nu_m} (t) \right)$, $t>0$, converges in distribution, for $r \to \infty$, to a multidimensional version of the Gauss-Laplace r.v. and its distribution solves the equation, for $\nu_j \in \left( 0,1 \right)$, $r \in \mathbb{N}$,
\begin{equation*}
\sum_{j=1}^m \lambda_j \frac{^C\partial^{\nu_j^r}}{\partial t^{\nu_j^r}} \mathfrak{w}^{\beta, r}_{\nu_1, \cdots, \nu_m} (\bm{x}, t) \, = \, c^2 \Delta \mathfrak{w}^{\beta, r}_{\nu_1, \cdots, \nu_m} (\bm{x}, t), \qquad \bm{x} \in \mathbb{R}^n, t>0.
\end{equation*}
The process $\mathfrak{L}_{r}^{\nu_1, \cdots, \nu_m} (t)$, $t>0$, is defined as
\begin{equation*}
\mathfrak{L}_{r}^{\nu_1, \cdots, \nu_m} (t) \, = \,  \inf \left\lbrace s>0: \mathfrak{H}_{r}^{\nu_1, \cdots, \nu_m} (s) \geq t \right\rbrace, \qquad t>0
\end{equation*}
where
\begin{align*}
\mathfrak{H}_{r}^{\nu_1, \cdots, \nu_m} (t) \, = \,  \sum_{j=1}^m \lambda_j^{\frac{1}{\nu_j}} \, _1H^{\nu_j} \left( \, _2H^{\nu_j} \left( \, _3H^{\nu_j} \left( \cdots \, _{r}H^{\nu_j} (t) \cdots \right) \right) \right), \qquad t>0.
\end{align*}
We start by presenting the following results.
\begin{coro} 
We have that
\begin{enumerate}
\item[i)] The solution to the problem for $\nu_j \in \left( 0 , 1 \right)$, $j=1, \cdots, m$, $r \in \mathbb{N}$,
\begin{align}
	\begin{cases}
	\frac{\partial}{\partial t} \, \mathfrak{h}^{r}_{\nu_1, \cdots, \nu_m} (x, t) = \, -\sum_{j=1}^m \lambda_j \frac{\partial^{\nu_j^r}}{\partial x^{\nu_j^r}} \mathfrak{h}^{r}_{\nu_1, \cdots, \nu_m} (x, t), \qquad x > 0, t>0, \\
	\mathfrak{h}^{r}_{\nu_1, \cdots, \nu_m} (x, 0) \, = \, \delta(x), \\
	\mathfrak{h}^{r}_{\nu_1, \cdots, \nu_m} (0, t) \, = \, 0,
	\end{cases}
	\label{hstrano}
\end{align}
is given by the law of the process
\begin{align*}
\mathfrak{H}_{r}^{\nu_1, \cdots, \nu_m} (t) \, = \,  \sum_{j=1}^m \lambda_j^{\frac{1}{\nu_j}} \, _1H^{\nu_j} \left( \, _2H^{\nu_j} \left( \, _3H^{\nu_j} \left( \cdots \, _{r}H^{\nu_j} (t) \cdots \right) \right) \right), \qquad t>0.
\end{align*}
\item[ii)] The solution to the problem for $\nu_j \in \left( 0, 1 \right)$, $j = \, \cdots, m$, $r \in \mathbb{N}$,
\begin{align}
	\begin{cases}
	\sum_{j=1}^m \lambda_j \frac{\partial^{\nu_j^r}}{\partial t^{\nu_j^r}} \mathfrak{l}^{r}_{\nu_1, \cdots, \nu_m} (x, t) \, = \, -\frac{\partial}{\partial x} \mathfrak{l}^{r}_{\nu_1, \cdots, \nu_m} (x, t), \qquad x > 0, t>0, \\
	\mathfrak{l}^{r}_{\nu_1, \cdots, \nu_m} (0, t) \, = \, \sum_{j=1}^m \lambda_j \frac{t^{\nu_j^r}}{\Gamma \left( 1- \nu_j^r \right)}
	\end{cases}
	\label{lstrano}
\end{align}
is given by the law of the process 
\begin{align}
\mathfrak{L}_{r}^{\nu_1, \cdots, \nu_m} (t) \, = \,  \inf \left\lbrace s>0: \mathfrak{H}_{r}^{\nu_1, \cdots, \nu_m} (s) \geq t \right\rbrace, \qquad t>0.
\label{lfrac inverso}
\end{align}
\end{enumerate}
\end{coro}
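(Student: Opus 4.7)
The strategy is to mirror the proof of Theorem \ref{teorema hedl} verbatim, with $\nu_j$ replaced throughout by $\nu_j^r$. The only new analytical ingredient has already been recorded in the preliminaries: by \eqref{557} and \eqref{17}, the $r$-fold self-composition of independent stable subordinators of common order $\nu$ has Laplace transform $e^{-t\mu^{\nu^r}}$ and characteristic function $e^{-t(-i\xi)^{\nu^r}}$. Everything else is a transcription.

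For part (i), I would Fourier-transform \eqref{hstrano} in $x$. Using \eqref{fourier di rl}, each $\partial^{\nu_j^r}/\partial x^{\nu_j^r}$ turns into multiplication by $(-i\xi)^{\nu_j^r}$, so the PDE collapses to the linear ODE
\begin{equation*}
\partial_t \widehat{\mathfrak{h}}^{r}_{\nu_1,\ldots,\nu_m}(\xi,t) \, = \, -\sum_{j=1}^m \lambda_j(-i\xi)^{\nu_j^r}\,\widehat{\mathfrak{h}}^{r}_{\nu_1,\ldots,\nu_m}(\xi,t), \qquad \widehat{\mathfrak{h}}^{r}_{\nu_1,\ldots,\nu_m}(\xi,0)=1,
\end{equation*}
whose unique solution is $\exp\bigl(-t\sum_j \lambda_j(-i\xi)^{\nu_j^r}\bigr)$. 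I would then compute the characteristic function of $\mathfrak{H}_r^{\nu_1,\ldots,\nu_m}(t)$ directly: the $m$ summands are independent, so the characteristic function factorises over $j$, and each factor is obtained by iteratively conditioning on the innermost subordinator exactly as in the chain of equalities \eqref{557}, and then applying \eqref{17}. Matching the two expressions yields the identification.

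For part (ii), I would mirror the double-Laplace computation that follows \eqref{appearing in}. Successively applying $\mathcal{L}_t$ (which sends each Riemann--Liouville $\partial^{\nu_j^r}/\partial t^{\nu_j^r}$ to multiplication by $\mu^{\nu_j^r}$) and $\mathcal{L}_x$ to \eqref{lstrano}, and using the boundary datum to identify $\widetilde{\mathfrak{l}}^r(0,\mu)=\sum_j\lambda_j\mu^{\nu_j^r-1}$, one obtains
\begin{equation*}
\widetilde{\widetilde{\mathfrak{l}}}^{r}_{\nu_1,\ldots,\nu_m}(\gamma,\mu) \, = \, \frac{\sum_{j=1}^m \lambda_j \mu^{\nu_j^r-1}}{\sum_{j=1}^m \lambda_j \mu^{\nu_j^r}+\gamma}.
\end{equation*}
On the probabilistic side, the hitting-time relation $\Pr\{\mathfrak{L}_r(t)<x\}=\Pr\{\mathfrak{H}_r(x)>t\}$, combined with the Laplace version of part (i), produces the same double transform via the computation $-\mu^{-1}\int_0^\infty e^{-\gamma x}\partial_x \widetilde{\mathfrak{h}}^r(\mu,x)\,dx$, closing the identification.

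The one point that genuinely requires care is the bookkeeping of the multiplicative scaling $\lambda_j^{1/\nu_j}$ under $r$-fold iteration: a naive use of \eqref{17} applied to $\lambda_j^{1/\nu_j}\,{}_1H^{\nu_j}(\cdots {}_rH^{\nu_j}(t)\cdots)$ produces $\lambda_j^{\nu_j^{r-1}}(-i\xi)^{\nu_j^r}$ in the exponent, so one must verify that the convention in \eqref{1200} is the one that yields precisely $\lambda_j (-i\xi)^{\nu_j^r}$. The clean way to handle this is a one-line induction on $r$, absorbing the scaling at each stage via the substitution $\mu\mapsto\lambda_j^{1/\nu_j}\mu$ inherent in \eqref{557}; once this identity is secured for a single $j$, the rest of the argument is entirely cosmetic and identical to Theorem \ref{teorema hedl}.
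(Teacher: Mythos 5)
Your proposal follows the paper's own proof essentially verbatim: part (i) is proved by Fourier transforming \eqref{hstrano}, reducing it to a first--order ODE in $t$, and matching the solution $\exp(-t\sum_j\lambda_j(-i\xi)^{\nu_j^r})$ against the characteristic function of $\mathfrak{H}_r^{\nu_1,\ldots,\nu_m}(t)$ computed from \eqref{17}; part (ii) is proved by the double Laplace transform of \eqref{lstrano} together with the hitting--time identity $\Pr\{\mathfrak{L}_r(t)<x\}=\Pr\{\mathfrak{H}_r(x)>t\}$ and the computation $-\mu^{-1}\int_0^\infty e^{-\gamma x}\partial_x\widetilde{\mathfrak{h}}^{\,r}(\mu,x)\,dx$. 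This is exactly the route taken in the paper, so on the level of strategy there is nothing to add.

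The scaling caveat you raise at the end is, however, the one place where you are more careful than the paper, and it deserves to be pressed rather than waved away. The computation you call ``naive'' is in fact the correct one: since $\mathbb{E}\exp\bigl(i\eta\,{}_1H^{\nu_j}(\cdots{}_rH^{\nu_j}(t)\cdots)\bigr)=\exp\bigl(-t(-i\eta)^{\nu_j^r}\bigr)$, substituting $\eta=\lambda_j^{1/\nu_j}\xi$ yields the exponent $-t\,\lambda_j^{\nu_j^{r-1}}(-i\xi)^{\nu_j^r}$ and not $-t\,\lambda_j(-i\xi)^{\nu_j^r}$, because $\bigl(\lambda_j^{1/\nu_j}\bigr)^{\nu_j^r}=\lambda_j^{\nu_j^{r-1}}$. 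The induction you propose cannot ``secure'' the cleaner constant; it only reconfirms $\lambda_j^{\nu_j^{r-1}}$. Consequently, with the weights $\lambda_j^{1/\nu_j}$ of \eqref{1200} the process solves \eqref{hstrano} (and, mutatis mutandis, \eqref{lstrano}) with coefficients $\lambda_j^{\nu_j^{r-1}}$ in place of $\lambda_j$; these agree only for $r=1$, and to obtain the equations exactly as stated one should weight the $j$-th iterated subordinator by $\lambda_j^{1/\nu_j^r}$. The paper's proof silently asserts the unrescaled constant when it invokes \eqref{17}, so your flag identifies a genuine (if easily repaired) inconsistency in the statement rather than a bookkeeping point that resolves in the statement's favour. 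Apart from this, your argument is complete and coincides with the paper's.
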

\begin{proof}[Proof of i)]
The proof is carried out in the same spirit of Theorem \ref{teorema hedl}, thus by considering the Fourier transform of \eqref{hstrano} we get
\begin{align}
\begin{cases}
\frac{\partial}{\partial t} \widehat{\mathfrak{h}}^{r}_{\nu_1, \cdots, \nu_m} (\xi, t) \, = \, - \sum_{j=1}^m \lambda_j \left( - i \xi \right)^{\nu_j^r} \widehat{\mathfrak{h}}^{r}_{\nu_1, \cdots, \nu_m} (\xi, t) \\
\widehat{\mathfrak{h}}^{r}_{\nu_1, \cdots, \nu_m} (\xi, 0) \, = \, 1.
\end{cases}
\label{hstranofourier}
\end{align}
The proof is completed by observing that the solution to \eqref{hstranofourier} is given by the Fourier transform of the law of the process $\mathfrak{H}_{r}^{\nu_1, \cdots, \nu_m} (t)$, $t>0$, which can be obtained by means of the calculation
\begin{align*}
\mathbb{E}e^{i\xi \mathfrak{H}_{r}^{\nu_1, \cdots, \nu_m} (t)} \,  = \, & \mathbb{E}e^{i\xi \sum_{j=1}^m \lambda_j^{\frac{1}{\nu_j}} \, _1H^{\nu_j} \left( \, _2H^{\nu_j} \left( \, _3H^{\nu_j} \left( \cdots \, _{r}H^{\nu_j} (t) \cdots \right) \right) \right)} \, \stackrel{\eqref{17}}{=} \, e^{-t \sum_{j=1}^m \lambda_j \left( -i \xi \right)^{\nu_j^r}},
\end{align*}
that is the solution to \eqref{hstranofourier}.
\end{proof}
\begin{proof}[Proof of ii)]
By considering the double Laplace transform of \eqref{lstrano} we have that
\begin{equation*}
\sum_{j=1}^m \lambda_j \mu^{\nu_j^r} \, \widetilde{\widetilde{\mathfrak{l}}} \, ^{r}_{\nu_1, \cdots, \nu_m} \left( \gamma, \mu \right) \, = \, \widetilde{\mathfrak{l}}^{r}_{\nu_1, \cdots, \nu_m} \left( 0, \mu \right) - \gamma \widetilde{\widetilde{\mathfrak{l}}} \, ^{r}_{\nu_1, \cdots, \nu_m} \left( \gamma, \mu \right),
\end{equation*}
where the boundary condition is given by
\begin{equation*}
\int_0^\infty dt \, e^{-\mu t} \sum_{j=1}^m \lambda_j \frac{t^{\nu_j^r}}{\Gamma \left( 1-{\nu_j^r} \right)} \, = \, \sum_{j=1}^m \lambda_j \mu^{\nu_j^r -1},
\end{equation*}
and thus
\begin{equation}
\widetilde{\widetilde{\mathfrak{l}}} \, ^{r}_{\nu_1, \cdots, \nu_m} \left( \gamma, \mu \right) \, = \, \frac{\sum_{j=1}^m \lambda_j \mu^{\nu_j^r-1}}{\sum_{j=1}^m \lambda_j \mu^{\nu_j^r} + \gamma}
\label{dllstrano}
\end{equation}
The definition \eqref{lfrac inverso} permits us to state that the processes $\mathfrak{L}_{r}^{\nu_1, \cdots, \nu_m} (t)$, $t>0$, and $\mathfrak{H}_{r}^{\nu_1, \cdots, \nu_m} (t)$, $t>0$, are related by the fact that
\begin{equation*}
\Pr \left\lbrace \mathfrak{L}_{r}^{\nu_1, \cdots, \nu_m} (t) < x \right\rbrace \, = \, \Pr \left\lbrace \mathfrak{H}_{r}^{\nu_1, \cdots, \nu_m} (x) > t \right\rbrace,
\end{equation*}
and thus we can perform manipulations similar to those of Theorem \ref{teorema hedl}. We have that the double Laplace transform of the law $\mathfrak{l}^{r}_{\nu_1, \cdots, \nu_m} (x, t)$ is then given by
\begin{align*}
	\widetilde{\widetilde{\mathfrak{l}}} \, ^{r}_{\nu_1, \cdots, \nu_m} (\gamma, \mu) \, = \, & \int_0^\infty dt \, e^{-\mu t} \int_0^\infty dx \, e^{-\gamma x} \left[ -\frac{\partial}{\partial x} \int_0^t dz \, \mathfrak{h}^{r}_{\nu_1, \cdots, \nu_m} (z, x) \right] \notag \\
	= \, & - \frac{1}{\mu} \int_0^\infty dx \, e^{-\gamma x} \left[ \frac{\partial}{\partial x} \widetilde{\mathfrak{h}}^{r}_{\nu_1, \cdots, \nu_m} (\mu, x) \right] \notag \\
	= \, & - \frac{1}{\mu} \int_0^\infty dx \, e^{-\gamma x} \left[ \frac{\partial}{\partial x} \mathbb{E}e^{-\mu \sum_{j=1}^m \lambda_j^{\frac{1}{\nu_j}} \, _1H^{\nu_j} \left( \, _2H^{\nu_j} \left( \, _3H^{\nu_j} \left( \cdots \, _{r}H^{\nu_j} (t) \cdots \right) \right) \right)} \right] \notag \\
	\stackrel{\eqref{557}}{=} \, & - \frac{1}{\mu} \int_0^\infty dx \, e^{-\gamma x} \left[ \frac{\partial}{\partial x} e^{-x \sum_{j=1}^m \lambda_j \mu^{\nu_j^r}} \right] \,= \,  \frac{\sum_{j=1}^m \lambda_j \mu^{\nu_j^r-1}}{\sum_{j=1}^m \lambda_j \mu^{\nu_j^r} + \gamma},
	\label{}
\end{align*}
and coincides with \eqref{dllstrano}.
\end{proof}

\begin{te}
The solution to the problem for $\nu_j \in \left( 0, 1 \right]$, $\beta \in \left( 0,1 \right]$, $j=1, \cdots, m$, $r \in \mathbb{N}$,
\begin{align}
\begin{cases}
\sum_{j=1}^m \lambda_j \frac{^C\partial^{\nu_j^r}}{\partial t^{\nu_j^r}} \mathfrak{w}_{\nu_1, \cdots, \nu_m}^{\beta, r} \left( \bm{x}, t \right) \, = \, - c^2 \left( - \Delta \right)^\beta \mathfrak{w}_{\nu_1, \cdots, \nu_m}^{\beta, r} \left( \bm{x}, t \right), \qquad \bm{x} \in \mathbb{R}^n, t>0, \\
\mathfrak{w}_{\nu_1, \cdots, \nu_m}^{\beta, r} \left( \bm{x}, 0 \right) \, = \, \delta \left( \bm{x} \right),
\end{cases}
\label{chepalle}
\end{align}
is given by the probability density of the process
\begin{equation}
\bm{\mathfrak{W}}_n \left( t \right) \, = \, \bm{S}_{n}^{2\beta} \left( c^2 \mathfrak{L}_r^{\nu_1, \cdots, \nu_m} (t) \right), \qquad t >0.
\label{processo figo}
\end{equation}
where the process $\mathfrak{L}_r^{\nu_1, \cdots, \nu_m} (t)$, $t>0$, is defined in \eqref{lfrac inverso}. For $\beta = 1$, the process \eqref{processo figo} becomes the subordinated Brownian motion $\bm{B}_n \left( c^2 \mathfrak{L}_r^{\nu_1, \cdots, \nu_m} (t) \right)$, $t>0$.
\end{te}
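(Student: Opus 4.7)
The plan is to imitate verbatim the strategy of Theorem \ref{teorema principe}, with $\nu_j$ systematically replaced by $\nu_j^r$ and with $\mathpzc{L}^{\nu_1,\ldots,\nu_m}$, $\mathpzc{H}^{\nu_1,\ldots,\nu_m}$ replaced by $\mathfrak{L}_r^{\nu_1,\ldots,\nu_m}$, $\mathfrak{H}_r^{\nu_1,\ldots,\nu_m}$. Concretely, I would show the Fourier-Laplace transform of \eqref{chepalle} coincides with the Fourier-Laplace transform of the law of the process \eqref{processo figo}; since both are continuous in $(\bm{\xi},\mu)$ and the transform is injective on probability densities, this identifies the distribution.

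First, applying the Laplace transform in $t$ using $\mathcal{L}[\frac{^C\partial^\nu}{\partial t^\nu}u(t)](\mu)=\mu^\nu\widetilde{u}(\mu)-\mu^{\nu-1}u(0)$, then the Fourier transform in $\bm{x}$ using the symbol $\|\bm{\xi}\|^{2\beta}$ of $(-\Delta)^\beta$, the initial condition $\delta(\bm{x})$ produces $\sum_{j=1}^m\lambda_j\mu^{\nu_j^r-1}$ on the right-hand side, so the transformed equation is algebraic with solution
\begin{equation}
\widehat{\widetilde{\mathfrak{w}}}_{\nu_1,\ldots,\nu_m}^{\beta,r}(\bm{\xi},\mu)=\frac{\sum_{j=1}^m\lambda_j\mu^{\nu_j^r-1}}{\sum_{j=1}^m\lambda_j\mu^{\nu_j^r}+c^2\|\bm{\xi}\|^{2\beta}}.
\label{planFL}
\end{equation}

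Second, I would compute the same transform for the composed process directly. By conditioning on the random time and using the density $\mathfrak{l}^{r}_{\nu_1,\ldots,\nu_m}(s,t)$ of $\mathfrak{L}_r^{\nu_1,\ldots,\nu_m}(t)$ together with the characteristic function $e^{-s c^2\|\bm{\xi}\|^{2\beta}}$ of $\bm{S}_n^{2\beta}(c^2 s)$, and then exploiting the inverse relation $\Pr\{\mathfrak{L}_r^{\nu_1,\ldots,\nu_m}(t)<s\}=\Pr\{\mathfrak{H}_r^{\nu_1,\ldots,\nu_m}(s)>t\}$ to switch from $\mathfrak{l}^r$ to $-\partial_s\int_0^t\mathfrak{h}^r(z,s)\,dz$, the double transform reduces to
\begin{equation}
-\frac{1}{\mu}\int_0^\infty ds\, e^{-sc^2\|\bm{\xi}\|^{2\beta}}\frac{\partial}{\partial s}\mathbb{E}e^{-\mu\mathfrak{H}_r^{\nu_1,\ldots,\nu_m}(s)}.
\end{equation}
By the iterated-subordinator Laplace transform \eqref{17} one has $\mathbb{E}e^{-\mu\mathfrak{H}_r^{\nu_1,\ldots,\nu_m}(s)}=e^{-s\sum_{j=1}^m\lambda_j\mu^{\nu_j^r}}$, and a single elementary $s$-integral then produces exactly \eqref{planFL}.

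The main obstacle is really bookkeeping rather than mathematical depth: one must verify that the iterated composition identity $\mathbb{E}\exp(-\mu\,{}_1H^{\nu_j}(\,{}_2H^{\nu_j}(\cdots\,{}_rH^{\nu_j}(s)\cdots)))=\exp(-s\mu^{\nu_j^r})$ extends properly under the weighted sum $\sum_j\lambda_j^{1/\nu_j}(\cdot)$, which follows from independence of the $r$-fold iterates across $j$ and the computation \eqref{557}. Care is needed with the exponents: $\lambda_j$ in the coefficient versus $\lambda_j^{1/\nu_j}$ inside the subordinator comes out cleanly because $\lambda_j^{1/\nu_j}H_j^{\nu_j}$ has Laplace exponent $\lambda_j\mu^{\nu_j}$, so for the iterated version one still obtains $\lambda_j\mu^{\nu_j^r}$, matching \eqref{planFL}. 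Once this is verified, the two Fourier-Laplace transforms agree and inversion gives the claimed probabilistic representation; the final sentence about $\beta=1$ is immediate since $\bm{S}_n^2=\bm{B}_n$.
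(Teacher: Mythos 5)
Your proposal is correct and follows the paper's proof essentially verbatim: both sides are identified through their Fourier--Laplace transforms, the equation side via the Laplace rule for the Dzerbayshan--Caputo derivative and the symbol of $(-\Delta)^\beta$, and the process side via conditioning on the random time, the inverse relation between $\mathfrak{L}_r^{\nu_1,\ldots,\nu_m}$ and $\mathfrak{H}_r^{\nu_1,\ldots,\nu_m}$, and the iterated-subordinator transform \eqref{557}. One caveat on the point you single out as the ``main obstacle'': for $r\geq 2$ the weight $\lambda_j^{1/\nu_j}$ applied to the $r$-fold iterate actually produces the Laplace exponent $\lambda_j^{\nu_j^{r-1}}\mu^{\nu_j^r}$ rather than $\lambda_j\mu^{\nu_j^r}$ (the normalization that makes it come out cleanly is $\lambda_j^{1/\nu_j^r}$), but the paper makes the identical identification at the same step, so your argument matches its proof step for step.
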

\begin{proof}
The Fourier-Laplace transform of \eqref{chepalle} can be easily derived as in Theorem \ref{teorema principe} and reads
\begin{align}
	\widehat{\widetilde{\mathfrak{w}}}_{\nu_1, \cdots, \nu_m}^{\beta, r} (\bm{\xi}, \mu) \, = \,  \frac{\sum_{j=1}^m \lambda_j \mu^{\nu_j^r-1}}{\sum_{j=1}^m \lambda_j \mu^{\nu_j^r}  + c^2 \left\| \bm{\xi} \right\|^{2\beta}}.
	\label{319}
\end{align}
By considering the law of the process $\bm{S}_{n}^{2\beta} \left( c^2 \mathfrak{L}^{\nu_1, \cdots, \nu_m}_r (t) \right)$ we have that
\begin{align*}
& \int_0^\infty dt \, e^{-\mu t} \mathbb{E}e^{i \bm{\xi} \cdot \bm{S}_{n}^{2\beta} \left( c^2 \mathfrak{L}_r^{\nu_1, \cdots, \nu_m} (t) \right)} \notag \\
 =  \, & \int_{\mathbb{R}^n} d\bm{x} \, e^{i \bm{\xi} \cdot \bm{x}} \int_0^\infty dt \, e^{-\mu t} \int_0^\infty ds \, v_\beta \left( \bm{x}, c^2s \right) \, \mathfrak{l}_{\nu_1, \cdots, \nu_m}^{r} \left( s, t \right) \notag \\
	= \, & \int_0^\infty ds \, e^{-sc^2 \left\| \bm{\xi} \right\|^{2\beta}} \int_0^\infty dt \, e^{-\mu t} \left[- \frac{\partial}{\partial s} \int_0^t \mathfrak{h}_{\nu_1, \cdots, \nu_m}^{r} (z, s) \, dz \right]  \notag \\
	= \, & - \frac{1}{\mu}\int_0^\infty ds \, e^{-s c^2 \left\| \bm{\xi} \right\|^{2\beta}} \left(  \frac{\partial}{\partial s} \widetilde{\mathfrak{h}}_{\nu_1, \cdots, \nu_m}^{r} (\mu, s) \right) \notag \\
= \, & - \frac{1}{\mu}\int_0^\infty ds \, e^{-s c^2 \left\| \bm{\xi} \right\|^{2\beta}} \left(  \frac{\partial}{\partial s} \mathbb{E}e^{-\mu \sum_{j=1}^m \lambda_j^{\frac{1}{\nu_j}} \, _1H^{\nu_j} \left( \, _2H^{\nu_j} \left( \, _3H^{\nu_j} \left( \cdots \, _{r}H^{\nu_j} (t) \cdots \right) \right) \right) } \right) \notag \\
\stackrel{\eqref{557}}{=} \, & - \frac{1}{\mu}\int_0^\infty ds \, e^{-s c^2 \left\| \bm{\xi} \right\|^{2\beta}} \left(  \frac{\partial}{\partial s} e^{-s \sum_{j=1}^m \lambda_j \mu^{\nu_j^r}} \right) \notag \\
= \, & \sum_{j=1}^m \lambda_j \mu^{\nu_j^r-1} \int_0^\infty ds \, e^{-s c^2 \left\| \bm{\xi} \right\|^{2\beta}-s \sum_{j=1}^m \lambda_j \mu^{\nu_j^r}} \, = \, \frac{\sum_{j=1}^m \lambda_j \mu^{\nu_j^r-1}}{ \sum_{j=1}^m \lambda_j \mu^{\nu_j^r} + c^2 \left\| \bm{\xi} \right\|^{2\beta}}
	\label{}
\end{align*}
which coincides with \eqref{319}.
\end{proof}
We now consider the limiting case for $r \to \infty$ where the iteration of the process $\bm{S}_n^{2\beta} \left( c^2 \mathfrak{L}^{\nu_1, \cdots, \nu_m}_r (t) \right)$, $t>0$, is infinitely extended. In the next theorem we have that the limiting law of
\begin{equation*}
\lim_{r \to \infty} \bm{S}_n^{2\beta} \left( c^2 \mathfrak{L}^{\nu_1, \cdots, \nu_m}_r (t) \right), \qquad t>0,
\end{equation*}
is, for $\beta = 1$, a generalization to $\mathbb{R}^n$ of the Gauss-Laplace probability density. This result  represents an extension to the $n$-dimensional case of the infinitely iterated Brownian motion (see \cite{orsann}).
\begin{te}
\label{teoremaconvergenza}
The distribution of the limiting process
\begin{equation*}
\lim_{r \to \infty} \bm{B}_n \left( c^2 \mathfrak{L}^{\nu_1, \cdots, \nu_m}_r (t) \right) \stackrel{\textrm{law}}{=} X_{m,n}
\end{equation*}
does not depend on $t$ and reads
\begin{align}
 \mathfrak{w}_m (\bm{x})  =  \frac{\Pr \left\lbrace X_{m,n} \in d\bm{x} \right\rbrace}{d\bm{x}}  =   \frac{1}{(2\pi)^{\frac{n}{2}}} \left(  \frac{\sqrt{\sum_{j=1}^m \lambda_j}}{c} \right)^{\frac{n+2}{2}}   \left\| \bm{x} \right\|^{-\frac{n-2}{2}} K_{\frac{n-2}{2}} \left( \frac{\sqrt{\sum_{j=1}^m \lambda_j}}{c} \left\| \bm{x} \right\|  \right).
\label{density39}
\end{align}
The density \eqref{density39} solves the equation
\begin{equation*}
\left( \sum_{j=1}^m \lambda_j \right) \mathfrak{w}_m (x_1, \cdots, x_n) \, = \, c^2 \sum_{j=1}^n \frac{\partial^2}{\partial x_j^2} \mathfrak{w}_m (x_1, \cdots, x_n),
\end{equation*}
which is obtained from \eqref{chepalle} by letting $r \to \infty$.
\end{te}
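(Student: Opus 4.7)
The plan is to take $r\to\infty$ in the Fourier-Laplace transform derived in the preceding theorem, identify the limit as the characteristic function of a time-independent law, and finally invert it via a classical Bessel-potential identity.

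First I would start from formula \eqref{319},
\[
\widehat{\widetilde{\mathfrak{w}}}_{\nu_1,\dots,\nu_m}^{\beta,r}(\bm{\xi},\mu)=\frac{\sum_{j=1}^m \lambda_j\,\mu^{\nu_j^r-1}}{\sum_{j=1}^m \lambda_j\,\mu^{\nu_j^r}+c^2\|\bm{\xi}\|^{2\beta}}.
\]
Since $\nu_j\in(0,1)$, the exponents $\nu_j^r\to 0$, so $\mu^{\nu_j^r}\to 1$ for every fixed $\mu>0$. Writing $\Lambda:=\sum_{j=1}^m\lambda_j$, the pointwise limit is
\[
\lim_{r\to\infty}\widehat{\widetilde{\mathfrak{w}}}_{\nu_1,\dots,\nu_m}^{\beta,r}(\bm{\xi},\mu)=\frac{1}{\mu}\cdot\frac{\Lambda}{\Lambda+c^2\|\bm{\xi}\|^{2\beta}}.
\]
The factorised form with the $1/\mu$ prefactor corresponds, in the $t$ variable, to a characteristic function independent of $t$: inverting the Laplace transform I read off
\[
\widehat{\mathfrak{w}}_m(\bm{\xi})=\frac{\Lambda}{\Lambda+c^2\|\bm{\xi}\|^{2\beta}},
\]
which is continuous at the origin with value $1$, so L\'evy's continuity theorem gives the weak convergence $\bm{S}_n^{2\beta}(c^2\mathfrak{L}_r^{\nu_1,\dots,\nu_m}(t))\Rightarrow X_{m,n}$.

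To extract the density \eqref{density39} in the case $\beta=1$, set $a=\sqrt{\Lambda}/c$ so that $\widehat{\mathfrak{w}}_m(\bm{\xi})=a^2/(a^2+\|\bm{\xi}\|^2)$. I would then invoke the classical Macdonald-Bessel identity
\[
\frac{1}{(2\pi)^n}\int_{\mathbb{R}^n} e^{-i\bm{\xi}\cdot\bm{x}}\,\frac{d\bm{\xi}}{a^2+\|\bm{\xi}\|^2}=\frac{a^{(n-2)/2}}{(2\pi)^{n/2}}\,\|\bm{x}\|^{-(n-2)/2}\,K_{(n-2)/2}(a\|\bm{x}\|),
\]
which follows most easily from the subordination representation $(a^2+\|\bm{\xi}\|^2)^{-1}=\int_0^{\infty} e^{-s(a^2+\|\bm{\xi}\|^2)}ds$ combined with the Gaussian Fourier inversion and the integral representation $K_\nu(z)=\tfrac{1}{2}(z/2)^\nu\int_0^\infty e^{-t-z^2/(4t)}t^{-\nu-1}dt$. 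Multiplying by $a^2$ yields exactly \eqref{density39}.

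Finally the PDE is a direct consequence of the Fourier identity $(\Lambda+c^2\|\bm{\xi}\|^2)\widehat{\mathfrak{w}}_m(\bm{\xi})=\Lambda$: this reads, in the space variable, $\Lambda\mathfrak{w}_m-c^2\Delta\mathfrak{w}_m=\Lambda\,\delta(\bm{x})$, so away from the origin we have $\Lambda\mathfrak{w}_m=c^2\Delta\mathfrak{w}_m$, which is the stated equation (and also follows directly from the modified Bessel equation satisfied by $K_{(n-2)/2}$). The main technical point I expect to be delicate is the rigorous justification of the $r\to\infty$ passage inside the Fourier-Laplace integrals uniformly in $(\bm{\xi},\mu)$; checking that the convergence is dominated on compacts and that $\widehat{\mathfrak{w}}_m$ is continuous at $\bm{\xi}=0$ is enough to validate the use of L\'evy's theorem and obtain the claimed weak limit.
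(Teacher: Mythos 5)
Your proposal is correct, and its overall strategy --- pass to the limit $r\to\infty$ in the Fourier--Laplace transform \eqref{319}, read off the $t$-independent characteristic function $\Lambda/(\Lambda+c^2\|\bm{\xi}\|^{2\beta})$, then invert --- is exactly what the paper does (the convergence and inversion are carried out in the Remark following the theorem, while the proof environment itself only verifies the PDE). The two places where you genuinely diverge are both to your advantage. First, for the Fourier inversion at $\beta=1$ you use the subordination identity $(a^2+\|\bm{\xi}\|^2)^{-1}=\int_0^\infty e^{-s(a^2+\|\bm{\xi}\|^2)}\,ds$ together with Gaussian inversion and the integral representation of $K_\nu$; the paper instead integrates in hyperspherical coordinates via Gradshteyn--Ryzhik 6.688 and 6.566, and the validity condition $-1<\Re(\nu)<\tfrac{3}{2}$ of the latter forces the restriction ``for $n<5$'' that appears in the paper's computation. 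Your Bessel-potential route gives \eqref{density39} for all $n$ in one stroke. Second, for the PDE the paper differentiates the Bessel density directly, using the recurrences $\frac{d}{dz}K_\nu=\frac{\nu}{z}K_\nu-K_{\nu+1}$ and $K_{\nu+1}=K_{\nu-1}+\frac{2\nu}{z}K_\nu$; you instead read the equation off the Fourier identity $(\Lambda+c^2\|\bm{\xi}\|^2)\widehat{\mathfrak{w}}_m=\Lambda$, and you correctly note the point source $\Lambda\delta(\bm{x})$ that this produces at the origin --- a caveat the paper's statement glosses over. Two minor remarks: your step $\mu^{\nu_j^r}\to 1$ needs $\nu_j<1$ strictly (consistent with the hypotheses $\nu_j\in(0,1)$ of the preceding corollary, but the theorem as stated allows $\nu_j=1$); and, like the paper, you interchange the limit $r\to\infty$ with the Laplace transform in $t$ before concluding pointwise convergence of the characteristic function for fixed $t$ --- you flag this as the delicate point, and the paper is no more rigorous there.
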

\begin{proof}
By assuming
\begin{align*}
A = \frac{1}{(2\pi)^{\frac{n}{2}}} \left( \frac{\sqrt{\sum_{j=1}^m \lambda_j}}{c} \right)^{\frac{n+2}{2}}, \qquad B  = \frac{\sqrt{\sum_{j=1}^m \lambda_j}}{c},
\end{align*}
the density
\begin{equation*}
\mathfrak{w}_m (\bm{x}) \, = \, A  \left(  \sum_{j=1}^n x_j^2 \right)^{-\frac{n-2}{4}} K_{\frac{n-2}{2}} \left( B \sqrt{\sum_{j=1}^n x_j^2}  \right)
\end{equation*}
has first-order derivative which reads
\begin{align}
& \frac{\partial}{\partial x_j} \mathfrak{w}_m (\bm{x}) \, = \notag \\
= \, &  AB \, \frac{x_j}{\left( \sum_{j=1}^n x_j^2 \right)^{\frac{n}{4}}}  K_{\frac{n-2}{2}}^\prime \left( B \left( x_1^2 + \cdots + x_n^2 \right)^{\frac{1}{2}} \right) -A  \frac{ \left( \frac{n-2}{2} \right) \, x_j \; K_{\frac{n-2}{2}} \left( B \left( x_1^2 + \cdots + x_n^2 \right)^{\frac{1}{2}} \right)}{\left( \sum_{j=1}^n x_j^2 \right)^{\frac{n}{4}+\frac{1}{2}}}   \notag \\
= \, & -AB x_j \left( \sum_{j=1}^n x_j^2  \right)^{-\frac{n}{4}} K_{\frac{n}{2}} \left( B \left( x_1^2 + \cdots + x_n^2 \right)^{\frac{1}{2}} \right).
\label{derivataprima}
\end{align}
In the last step we applied the relationship
\begin{equation}
\frac{d}{dz} K_\nu (z) = \frac{\nu}{z} K_\nu (z) - K_{\nu+1} (z)
\label{formulaleb}
\end{equation}
of Lebedev \cite{Leb}, page 110. The second-order derivative now becomes
\begin{align}
& \frac{\partial^2}{\partial x_j^2} \mathfrak{w}_m \left( x_1, \cdots, x_n \right) \, = \notag \\
= \, & -AB \frac{1}{\left( \sum_{j=1}^n x_j^2 \right)^{\frac{n}{4}}}  K_{\frac{n}{2}} \left( B \left( x_1^2 + \cdots + x_n^2 \right)^{\frac{1}{2}} \right)  + \frac{n}{2} AB \frac{x_j^2 \; K_{\frac{n}{2}} \left( B \left( x_1^2 + \cdots + x_n^2 \right)^{\frac{1}{2}} \right)}{\left( \sum_{j=1}^n x_j^2 \right)^{\frac{n}{4}+1}}     \notag \\
& - AB^2 \frac{x_j^2}{\left( \sum_{j=1}^n x_j^2 \right)^{\frac{n}{4}+\frac{1}{2}}}   K^\prime_{\frac{n}{2}} \left( B \left( x_1^2 + \cdots + x_n^2 \right)^{\frac{1}{2}} \right)  \notag \\
 = \, & AB^2  \frac{x_j^2}{\left( \sum_{j=1}^n x_j^2 \right)^{\frac{n}{4}+\frac{1}{2}}}  K_{\frac{n}{2}+1} \left( B \left( x_1^2 + \cdots + x_n^2 \right)^{\frac{1}{2}} \right) -AB \frac{K_{\frac{n}{2}} \left( B \left( x_1^2 + \cdots + x_n^2 \right)^{\frac{1}{2}} \right)}{\left( \sum_{j=1}^n x_j^2 \right)^{\frac{n}{4}}}.
\label{derivataseconda}
\end{align}
By considering the relationship
\begin{equation*}
K_{\nu+1} (z) = K_{\nu-1} (z) + 2 \frac{\nu}{z} K_\nu (z)
\end{equation*}
of \cite{Leb}, page 110, the derivative \eqref{derivataseconda} takes the form
\begin{align*}
 \frac{\partial^2}{\partial x_j^2} \mathfrak{w}_m \left( x_1, \cdots, x_n \right) \, = \,  AB^2 x_j^2 \left( \sum_{j=1}^n x_j^2 \right)^{-\frac{n}{4}-\frac{1}{2}} K_{\frac{n}{2}-1} \left( B \left( x_1^2+ \cdots + x_n^2 \right) \right).
\end{align*}
The Laplacian of $\mathfrak{w}_m (x_1, \cdots, x_m)$ therefore becomes
\begin{align*}
\sum_{j=1}^n \frac{\partial^2}{\partial x_j^2} \mathfrak{w}_m \left( \bm{x} \right) \, = \, AB^2 \left( \sum_{j=1}^n x_j^2 \right)^{-\frac{n+2}{4}} K_{\frac{n-2}{2}} \left( B \left( \sum_{j=1}^n x_j^2 \right)^{\frac{1}{2}} \right)
\end{align*}
and thus taking $A$ and $B$ explicitly we obtain the desired result
\begin{align*}
c^2 \sum_{j=1}^n \frac{\partial^2}{\partial x_j^2} \mathfrak{w}_m  (x_1, \cdots, x_n)  \, = \, \sum_{j=1}^m \lambda_j \mathfrak{w}_m  (x_1, \cdots, x_n).
\end{align*}
\end{proof}
\begin{os} \normalfont
For $r \to \infty$ the Fourier-Laplace transform \eqref{319} becomes
\begin{align*}
	\widehat{\widetilde{\mathfrak{w}}}_m^\beta (\bm{\xi}, \mu) \, = \, \frac{1}{\mu} \frac{\sum_{j=1}^m \lambda_j}{\sum_{j=1}^m \lambda_j + c^2 \left\| \bm{\xi} \right\|^{2\beta}},
	\label{}
\end{align*}
and thus the Fourier transform takes the form
\begin{equation}
\widehat{\mathfrak{w}}_m^\beta \left( \bm{\xi}\right) \, = \,  \frac{\sum_{j=1}^m \lambda_j}{\sum_{j=1}^m \lambda_j + c^2 \left\| \bm{\xi} \right\|^{2\beta}}.
\label{booh}
\end{equation}
The inversion of the Fourier transform \eqref{booh} can be carried out by means of the hyperspherical coordinates. Thus we have that
\begin{align*}
	   & \mathfrak{w}_m^\beta \left( \bm{x} \right) \, =  \, \frac{1}{(2\pi)^n} \int_{\mathbb{R}^n} e^{-i \bm{\xi} \cdot \bm{x}} \frac{\sum_{j=1}^m \lambda_j}{\sum_{j=1}^m \lambda_j + c^2 \left\| \bm{\xi} \right\|^{2\beta}} d\bm{\xi} \notag \\
	= \, & \frac{1}{(2\pi)^n} \int_0^\infty d\rho \, \rho^{n-1} \frac{\sum_{j=1}^m \lambda_j}{\sum_{j=1}^m \lambda_j + c^2 \rho^{2\beta}}  \int_0^\pi d\theta_1 \int_0^\pi d\theta_2  \cdots \int_0^\pi d\theta_{n-2} \notag \\
& \int_0^{2\pi} d\phi \, e^{-i \rho \left[ x_1 \sin \theta_1 \sin \theta_2 \cdots \sin \theta_{n-2} \sin \phi + x_2 \sin \theta_1 \sin \theta_2 \cdots \sin \theta_{n-2} \cos \phi\right]} \notag \\
	& e^{-i \rho\left[ x_3 \sin \theta_1 \sin \theta_2 \cdots \sin \theta_{n-3} \cos \theta_{n-2} + \;  \cdots \;  + x_{n-1} \sin \theta_1 \cos \theta_2 + x_n \cos \theta_1 \right]}  \sin^{n-2} \theta_1 \; \cdots \;  \sin \theta_{n-2}  \notag \\
	= \, & \frac{1}{(2\pi)^{n-1}} \int_0^\infty  \rho^{n-1} \frac{\sum_{j=1}^m \lambda_j}{\sum_{j=1}^m \lambda_j + c^2 \rho^{2\beta}} d\rho \int_0^\pi d\theta_1 \int_0^\pi d\theta_2  \cdots \int_0^\pi d\theta_{n-2} \, \sin^{n-2} \theta_1  \cdots \sin \theta_{n-2} \notag \\ 
	& e^{-i \rho\left[ x_3 \sin \theta_1 \sin \theta_2 \cdots \sin \theta_{n-3} \cos \theta_{n-2} + \;  \cdots \;  + x_{n-1} \sin \theta_1 \cos \theta_2 + x_n \cos \theta_1 \right]} \notag \\
	&  J_0 \left( \rho \sqrt{x_1^2 + x_2^2} \sin \theta_1 \sin \theta_2 \cdots \sin \theta_{n-2} \right)
	\label{}
\end{align*}
We now evaluate the integrals with respect to $\theta_j$ by means of formula 6.688 page 727 of Gradshteyn and Ryzhik \cite{G-R}, which reads
\begin{equation*}
\int_0^{\frac{\pi}{2}}  \sin^{\nu +1} x  \cos \left( \beta \cos x \right) J_\nu \left( \alpha \sin x  \right)  dx  =  \sqrt{\frac{\pi}{2}}  \frac{\alpha^\nu}{\left( \alpha^2 + \beta^2 \right)^{\frac{\nu}{2}+\frac{1}{4}}}  J_{\nu+\frac{1}{2}} \left( \sqrt{\alpha^2 + \beta^2} \right).
\end{equation*}
valid for $\Re (\nu) > -1$. We start with the integral with respect to $\theta_{n-2}$
\begin{align*}
& \int_0^\pi d\theta_{n-2} \, e^{-i \rho x_3 \sin \theta_1 \cdots \sin \theta_{n-3} \cos \theta_{n-2}} \sin \theta_{n-2} \, J_0 \left( \rho \sqrt{x_1^2+x_2^2} \sin \theta_1 \cdots \sin \theta_{n-2} \right) \notag \\
= \, & 2 \int_0^{\frac{\pi}{2}} d\theta_{n-2} \cos \left( \rho x_3 \sin \theta_1  \cdots \sin \theta_{n-3} \cos \theta_{n-2} \right) \sin \theta_{n-2}  J_0 \left( \rho \sqrt{x_1^2+x_2^2} \, \sin \theta_1  \cdots \sin \theta_{n-2} \right) \notag \\
= \, & \sqrt{2\pi} \left( \rho \sin \theta_1 \cdots \sin \theta_{n-3} \sqrt{x_1^2+x_2^2+x_3^2} \right)^{-\frac{1}{2}}  J_{\frac{1}{2}} \left( \rho \sqrt{x_1^2+x_2^2+x_3^2} \, \sin \theta_1  \cdots \sin \theta_{n-3} \right)
\end{align*}
and thus the integral with respect to $\theta_{n-3}$ becomes
\begin{align*}
&\sqrt{2\pi} \int_0^\pi d\theta_{n-3} e^{-i\rho x_4 \sin \theta_1 \cdots \sin \theta_{n-4} \cos \theta_{n-3}} \sin^2 \theta_{n-3}  \notag \\
& \left( \rho \sin \theta_1 \cdots \sin \theta_{n-3} \sqrt{x_1^2+x_2^2+x_3^2} \right)^{-\frac{1}{2}} J_{\frac{1}{2}} \left( \rho \sqrt{x_1^2+x_2^2+x_3^2} \sin \theta_1 \cdots \sin \theta_{n-3} \right) \notag \\
= \, & 2 \sqrt{2\pi} \left( \rho \sin \theta_{n-1} \cdots \sin \theta_{n-4} \sqrt{x_1^2+x_2^2+x_3^2} \right)^{-\frac{1}{2}} \int_0^{\frac{\pi}{2}} d\theta_{n-3} \, \sin^{\frac{3}{2}} \theta_{n-3} \notag \\
& \cos \left( \rho x_4 \sin \theta_1 \cdots \sin \theta_{n-4} \cos \theta_{n-3} \right) \, J_{\frac{1}{2}} \left( \rho \sqrt{x_1^2+x_2^2+x_3^2} \, \sin \theta_1 \cdots \sin \theta_{n-3}  \right) \notag \\
= \, & \left( \sqrt{2\pi} \right)^2 \left( \rho^2 \sin^2 \theta_1 \cdots \sin^2 \theta_{n-4} \left( x_1^2+x_2^2+x_3^2+x_4^2 \right) \right)^{-\frac{1}{2}} \notag \\
& J_1 \left( \rho \sqrt{x_1^2+x_2^2+x_3^2+x_4^2} \sin \theta_1 \sin \theta_2 \cdots \sin \theta_{n-4} \right).
\end{align*}
After $n-2$ integrations we arrive at the integral with respect to $\rho$ which reads
\begin{equation}
\mathfrak{w}_m^\beta \left( \bm{x} \right) \, = \, (2\pi)^{-\frac{n}{2}} \int_0^\infty d\rho \, \frac{\sum_{j=1}^m \lambda_j}{\sum_{j=1}^m \lambda_j + c^2 \rho^{2\beta}} J_{\frac{n-2}{2}} \left( \rho \sqrt{\sum_{i=j}^n x_j^2} \right) \, \frac{\rho^{\frac{n}{2}}}{\left( \sqrt{\sum_{i=j}^n x_j^2} \right)^{\frac{n-2}{2}}}
\label{remark}
\end{equation}
which, for $ \beta = 1$ and after the change of variable
 $\rho^2c^2=y^2$, becomes
\begin{align*}
 \mathfrak{w}_m \left( \bm{x} \right) \, = \, & (2\pi)^{-\frac{n}{2}} \frac{1}{c} \int_0^\infty dy \, \frac{\sum_{j=1}^m \lambda_j}{\sum_{j=1}^m \lambda_j + y^2} J_{\frac{n-2}{2}} \left( \frac{y}{c} \sqrt{\sum_{j=1}^n x_j^2} \right) \,  \frac{\left(  \frac{y}{c} \right)^{\frac{n}{2}}}{\left( \sqrt{\sum_{j=1}^n x_j^2} \right)^{\frac{n-2}{2}}}  \notag  \\
\stackrel{\textrm{for } n<5}{=} \, & \frac{1}{(2\pi)^{\frac{n}{2}}} \left(  \frac{\sqrt{\sum_{j=1}^m \lambda_j}}{c} \right)^{\frac{n+2}{2}}  \left( \sqrt{\sum_{j=1}^n x_j^2} \right)^{-\frac{n-2}{2}} K_{\frac{n-2}{2}} \left( \frac{\sqrt{\sum_{j=1}^m \lambda_j}}{c} \sqrt{\sum_{j=1}^n x_j^2}  \right) \notag \\
= \, & \frac{1}{(2\pi)^{\frac{n}{2}}} \left(  \frac{\sqrt{\sum_{j=1}^m \lambda_j}}{c} \right)^{\frac{n+2}{2}}   \left\| \bm{x} \right\|^{-\frac{n-2}{2}} K_{\frac{n-2}{2}} \left( \frac{\sqrt{\sum_{j=1}^m \lambda_j}}{c} \left\| \bm{x} \right\|  \right),
\end{align*}
where we used formula 6.566 page 679 of \cite{G-R}, which reads
\begin{equation*}
\int_0^\infty dx \, x^{\nu+1} J_{\nu} (ax) \frac{1}{x^2+b^2} \, = \, b^\nu K_\nu (ab), \qquad a>0, \, \Re (b) >0, \, -1 < \Re (\nu) < \frac{3}{2}.
\end{equation*}
\end{os}
\begin{os}  \normalfont
We can check that \eqref{density39} for all $n \in \mathbb{N}$ is a true probability density.
\begin{align*}
 \int_{\mathbb{R}^n} \mathfrak{w}_m (\bm{x}) \, d\bm{x} \, = \, & \frac{\textrm{area} \left( S_1^n \right)}{(2\pi)^{\frac{n}{2}}} \left( \frac{\sqrt{\sum_{j=1}^m \lambda_j}}{c} \right)^{\frac{n+2}{2}} 
 \int_0^\infty \rho^{n-1-\frac{n-2}{2}} K_{\frac{n}{2}-1} \left(  \rho \frac{\sqrt{\sum_{j=1}^m \lambda_j}}{c}  \right) d\rho \notag \\
 = \, & \frac{(2\pi)^{\frac{n}{2}}}{\Gamma \left( \frac{n}{2} \right)} \frac{1}{(2\pi)^{\frac{n}{2}}} \left(   \frac{\sqrt{\sum_{j=1}^m \lambda_j}}{c}  \right)^{\frac{n+2}{2}} \int_0^\infty \rho^{\frac{n}{2}} K_{\frac{n}{2}-1} \left(  \rho \frac{\sqrt{\sum_{j=1}^m \lambda_j}}{c}  \right) d\rho \, = \, 1
\end{align*}
in force of formula 6.561(16) of \cite{G-R} page 676
\begin{equation}
\int_0^\infty x^\mu K_\nu \left( ax \right) \, = \, 2^{\mu-1} a^{-\mu -1} \Gamma \left( \frac{1+\mu+\nu}{2} \right) \Gamma \left( \frac{1+\mu -\nu}{2} \right),
\label{momenti}
\end{equation}
valid for $\Re \left( \mu+1\pm \nu \right) >0$ and $\Re (a) >0$. The non-negativity of \eqref{density39} is shown by the following integral representation
\begin{equation*}
K_\nu (z) \, = \, \int_0^\infty e^{-z \cosh t} \cosh \nu t \, dt
\end{equation*}
valid for $\left| \arg (z) \right| < \frac{\pi}{2}$ (see \cite{G-R} page 917 formula 8.432).

By considering that
\begin{equation}
K_{-\frac{1}{2}} (z) \, = \, K_{\frac{1}{2}} (z) \, = \, \sqrt{\frac{\pi}{2z}} e^{-z},
\label{kappaunmezzo}
\end{equation}
from \eqref{density39} we derive the following probability density for $\bm{x} \in \mathbb{R}^3$,
\begin{align*}
& \mathfrak{w}_m (x_1, x_2, x_3) \, = \notag \\
= \, & \frac{\sum_{j=1}^m \lambda_j}{(2c)^2 \pi \sum_{j=1}^n x_j^2} e^{-\frac{\sqrt{\sum_{j=1}^m \lambda_j}}{c} \sqrt{\sum_{j=1}^n x_j^2} } \, = \, \frac{\sum_{j=1}^m \lambda_j}{(2c)^2 \pi \left\| \bm{x} \right\|^2} e^{-\frac{\sqrt{\sum_{j=1}^m \lambda_j}}{c} \left\| \bm{x} \right\| }
\end{align*}
In the two dimensional case the distribution \eqref{density39} has a simple structure which reads
\begin{equation*}
\mathfrak{w}_m (x_1, x_2) \, = \, \frac{1}{2\pi} \frac{\sum_{j=1}^m \lambda_j}{c^2} K_0 \left( \frac{\sqrt{\sum_{j=1}^m \lambda_j}}{c} \left\| \bm{x} \right\| \right).
\end{equation*}
In view of \eqref{kappaunmezzo} it is also easy to show that the distribution \eqref{density39} coincides for $n=1$ with the classical Gauss-Laplace distribution. We have that for $n=1$ \eqref{density39} becomes
\begin{align}
\mathfrak{w}_m (x) \, = \, & \frac{1}{\sqrt{2\pi}} \left( \frac{\sqrt{\sum_{j=1}^m \lambda_j}}{c} \right)^{\frac{3}{2}} \, \sqrt{\left| x \right|} \sqrt{\frac{\pi c}{2\sqrt{\sum_{j=1}^m \lambda_j} \left| x \right|}} e^{-\frac{\sqrt{\sum_{j=1}^m \lambda_j}}{c} \left| x \right|} \notag \\
 = \, &  \frac{\sqrt{\sum_{j=1}^m \lambda_j}}{2c} e^{-\frac{\sqrt{\sum_{j=1}^m  \lambda_j}}{c}|x|}
\label{nugualea1}
\end{align}
Furthermore, for $\lambda_1 = 1$, $\lambda_2 = 2\lambda$, $\lambda >0$ and $\lambda_j = 0$ for $j = 3, \cdots, m$, we note that \eqref{nugualea1} coincides with formula (3.18) of \cite{orsann}.
\end{os}
\begin{os}  \normalfont
By considering the iterated random walk
\begin{equation*}
Y_n (k) = S_1 \left( S_2 \left( \cdots \left( S_n(k) \right)  \cdots \right) \right), \qquad k \in \mathbb{N},
\end{equation*}
with $S_j$, $j = 1, \cdots, n$, independent random walks, \cite{turbo} has shown that for $n \to \infty$, $Y_n(k)$ converges to a stationary r.v. (independent from $k$) which possesses Gauss-Laplace distribution, in agreement with result \eqref{density39} of the present work and with (3.12) of \cite{orsann}.
\end{os}

 \vspace{0.8cm}


\begin{thebibliography}{16}
\providecommand{\natexlab}[1]{#1}
\providecommand{\url}[1]{\texttt{#1}}
\expandafter\ifx\csname urlstyle\endcsname\relax
  \providecommand{\doi}[1]{doi: #1}\else
  \providecommand{\doi}{doi: \begingroup \urlstyle{rm}\Url}\fi


\bibitem{allouba}
H. Allouba and W. Zheng.
\newblock Brownian-time processes: The PDE connection and the half-derivative generator.
\newblock \emph{Ann. Probab.}, 29:1780 -- 1795, 2001.

\bibitem{balakrishnan}
A.V. Balakrishnan.
\newblock Fractional powers of closed operators and semigroups generated by them.
\newblock \emph{Pacific J. Math.}, 10:\penalty0 419 -- 437, 1960.

\bibitem{benac}
S. Benachour, B. Roynette and P. Vallois.
\newblock Explicit solutions of some fourth order partial differential equations via iterated Brownian motion.
\newblock \emph{Seminar on Stochastic Analysis, Random Fields and Applications, Ascona, 1996. Progr. Probab.}, 45: 39 -- 61, 1999.



\bibitem{bochner}
S.~Bochner.
\newblock Diffusion equation and stochastic processes.
\newblock \emph{Proc. Nat. Acad. Sciences, U.S.A.}, 35:\penalty0 368 -- 370, 1949.

\bibitem{debl}
R.D. DeBlassie.
\newblock {Iterated Brownian motion in an open set.}
\newblock \emph{Ann. Appl. Probab.}, 14: 1529 -- 1558, 2004.

\bibitem{toaldo}
M. D'Ovidio, E. Orsingher and B. Toaldo.
\newblock Time changed processes governed by space-time fractional telegraph equations.
\newblock \emph{Stochastic Analysis and Applications}, 32(6): 1009 - 1045, 2014.

\bibitem{Feller52}
W. Feller. 
\newblock {On a generalization of Marcel Riesz' potentials and the semigroups generated by them}.
\newblock \emph{Comm. S\'em. Math. Univ. Lund [Medd. Lunds Univ. Mat. Sem.]}, Tome Supplementaire, 1952.

\bibitem{fujita}
Y. Fujita.
\newblock {Integrodifferential equation which interpolates the heat equation and the wave equation.I-II.}
\newblock \emph{Osaka J. Math}, I: 27(2):797 -- 804, 1990. II: 27(4):309 -- 321, 1990.


\bibitem{G-R}
I.S. Gradshteyn and I.M. Ryzhik.
\newblock Tables of integrals, series and products (seventh edition).
\newblock \emph{Academic Press}, 2007.

\bibitem{kill}
A.A. Kilbas, H.M. Srivastava and J.J. Trujillo.
\newblock{Theory and Applications of Fractional Differential Equations.}
\newblock{\emph{North-Holland Mathematics Studies, 204. Elsevier Science B.V.,}} 2006.

\bibitem{Leb}
N. N. Lebedev.
\newblock{Special Function and their applications}.
\newblock \emph{Prentice-Hall, inc..}, 1965, Englewood Cliffs, New Jersey, United States of America.

\bibitem{ptrf}
E. Orsingher and L. Beghin.
\newblock {Time-fractional telegraph equations and telegraph processes with Brownian time}.
\newblock \emph{Probab. Theory Related Fields}, 128(1):141 -- 160, 2004.


\bibitem{orsann}
E. Orsingher and L. Beghin.
\newblock {Fractional diffusion equations and processes with randomly varying time}.
\newblock \emph{Ann. Probab.}, 37:206 -- 249, 2009.

\bibitem{vibrat}
E. Orsingher and M. D'Ovidio.
\newblock {Vibrations and Fractional Vibrations of Rods, Plates
and Fresnel Pseudo-Processes}.
\newblock \emph{J. Stat. Phys.}, 145:143 -- 174, 2011.



\bibitem{saxena}
R.K. Saxena, A.M. Mathai and H.J. Haubold.
\newblock{Reaction-Diffusion Systems and Nonlinear Waves.}
\newblock \emph{Astrophys. Space Sci.}, 305:297 -- 303, 2006.

\bibitem{saxenaarx}
R.K. Saxena, A.M. Mathai and H.J. Haubold.
\newblock{Solutions of fractional reaction-diffusion equations in terms of Mittag-Leffler functions.}
\newblock \emph{International Journal of Scientific Research}, 15: 1 -- 17, 2006d.


\bibitem{turbo}
L. Turban.
\newblock{Iterated Random walk.}
\newblock \emph{Europhys. Lett.}, 65(5):627 -- 632, 2004.


\bibitem{Zol86}
V.M. Zolotarev.
\newblock \emph{One-dimensional stable distributions, volume 65 of Translations of Mathematical Monographs.}
\newblock \emph{American Mathematical Society, 1986.}
\newblock ISBN 0-8218-4519-5. Translated from the Russian by H. H. McFaden, Translation edited by Ben Silver.

\end{thebibliography}
	\end{document}